\pgfplotsset{compat=1.8}
\newcommand{\Sing}{\text{Sing}}
\newcommand{\VSing}{\text{VSing}}
\newcommand{\tbeta}{\gamma}
\newcommand{\tstar}{\gamma_*}
\newcommand{\dimsing}{\delta_\dims}
\newcommand{\dirichlet}{{\tfrac\qdim\pdim}}
\renewcommand{\SS}{\mathcal S}
\renewcommand{\bfA}{A}
\newcommand{\bigOC}[2]{\big(#1,#2\big]}
\newcommand{\firstdim}{m}
\newcommand{\seconddim}{n}
\newcommand{\pdim}{\firstdim}
\newcommand{\qdim}{\seconddim}
\newcommand{\dims}{{\firstdim,\seconddim}}
\newcommand{\dimsum}{{\firstdim + \seconddim}}
\newcommand{\dimprod}{{\firstdim \seconddim}}
\renewcommand{\index}{k}
\newcommand{\mink}{h}
\newcommand{\Mink}{\hh}
\newcommand{\pert}{b}
\newcommand{\Pert}{\bb}
\newcommand{\exceptionalset}{\{\tfrac1\pdim,-\tfrac1\qdim\}}
\newcommand{\timeerror}{s}
\newcommand{\dynexp}{\tau}
\newcommand{\dir}{L}
\newcommand{\diff}{M}
\renewcommand{\vert}{\LL}
\newcommand{\niol}{N}
\begin{document}
\title[A variational principle in the parametric geometry of numbers]{A variational principle in the parametric geometry of numbers, with applications to metric Diophantine approximation}

\authortushar\authorlior\authordavid\authormariusz


\begin{Abstract}
We establish a new connection between metric Diophantine approximation and the parametric geometry of numbers by proving a variational principle facilitating the computation of the Hausdorff and packing dimensions of many sets of interest in Diophantine approximation. In particular, we show that the Hausdorff and packing dimensions of the set of singular $m\times n$ matrices are both equal to $mn \big(1-\frac1{m+n}\big)$, thus proving a conjecture of Kadyrov, Kleinbock, Lindenstrauss, and Margulis as well as answering a question of Bugeaud, Cheung, and Chevallier. Other
applications include computing the dimensions of the sets of points witnessing conjectures of Starkov and Schmidt.\\

{\it R\'esum\'e.} Nous \'etablissons une nouvelle connexion entre l'approximation m\'etrique diophantine et la g\'eom\'etrie param\'etrique des nombres en prouvant un principe variationnel facilitant le calcul des dimensions d'Hausdorff et de packing de nombreux ensembles d'int\'er\^{e}t dans l'approximation diophantienne. Nous montrons que les dimensions pr\'ecit\'ees de l'ensemble des matrices $m\times n$ singuli\`eres sont toutes deux \'egales \`a $mn \big(1-\frac1{m+n}\big)$, d\'emontrant ainsi une conjecture de Kadyrov, Kleinbock, Lindenstrauss, et Margulis et r\'epondant \`a une question de Bugeaud, Cheung, et Chevallier. D'autres applications comprennent le calcul des dimensions des ensembles des points t\'emoignant des conjectures de Starkov et de Schmidt.
\end{Abstract}
\maketitle

\vspace{-0.5 in}
\section{Main results}
\label{sectionmain}

The notion of singularity (in the sense of Diophantine approximation) was introduced by Khintchine, first in 1937 in the setting of simultaneous approximation \cite{Khinchin6}, and later in 1948 in the more general setting of matrix approximation \cite{Khinchin4}.\Footnote{Although Khintchine's 1926 paper \cite{Khinchin3} includes a proof of the existence of $2\times 1$ and $1\times 2$ matrices possessing a certain property which implies that they are singular, it does not include a definition of singularity nor discuss any property equivalent to singularity.} Since then this notion has been studied within Diophantine approximation and allied fields, see Moshchevitin's 2010 survey \cite{Moshchevitin3}. An $\pdim\times \qdim$ matrix $\bfA$ is called \emph{singular} if for all $\epsilon > 0$, there exists $Q_\epsilon$ such that for all $Q \geq Q_\epsilon$, there exist integer vectors $\pp\in \Z^\pdim$ and $\qq\in \Z^\qdim$ such that
\begin{align*}
\|\bfA \qq + \pp\| \leq \epsilon Q^{-\qdim/\pdim}\;\;\;\; \text{ and } \;\;\;\;
0 < \|\qq\| \leq Q.
\end{align*}
Here $\|\cdot\|$ denotes an arbitrary norm on $\R^\pdim$ or $\R^\qdim$. We denote the set of singular $\pdim\times\qdim$ matrices by $\Sing(\pdim,\qdim)$. For $1\times 1$ matrices (i.e. numbers), being singular is equivalent to being rational, and in general any matrix $\bfA$ which satisfies an equation of the form $\bfA \qq = \pp$, with $\pp,\qq$ integral and $\qq$ nonzero, is singular. However, Khintchine proved that there exist singular $2\times 1$ matrices whose entries are linearly independent over $\Q$ \cite[Satz II]{Khinchin3}, and his argument generalizes to the setting of $\pdim\times\qdim$ matrices for all $(\pdim,\qdim) \neq (1,1)$. The name \emph{singular} derives from the fact that $\Sing(\pdim,\qdim)$ is a Lebesgue nullset for all $\pdim,\qdim$, see e.g. \cite[p.431]{Khinchin6} or \cite[Chapter 5, \67]{Cassels}. Note that singularity is a strengthening of the property of \emph{Dirichlet improvability} introduced by Davenport and Schmidt \cite{DavenportSchmidt4}.

In contrast to the measure zero result mentioned above, the computation of the Hausdorff dimension of $\Sing(\pdim,\qdim)$ has been a challenge that so far only met with partial progress. The first breakthrough was made in 2011 by Cheung \cite{Cheung}, who proved that the Hausdorff dimension of $\Sing(2,1)$ is $4/3$; this was extended in 2016 by Cheung and Chevallier \cite{CheungChevallier}, who proved that the Hausdorff dimension of $\Sing(\pdim,1)$ is $\pdim^2/(\pdim+1)$ for all $\pdim\geq 2$; while most recently Kadyrov, Kleinbock, Lindenstrauss, and Margulis \cite{KKLM} proved that the Hausdorff dimension of $\Sing(\pdim,\qdim)$ is at most $\dimsing := \dimprod\big(1 - \tfrac1{\dimsum}\big)$, and went on to conjecture that their upper bound is sharp for all $(\pdim,\qdim)\neq (1,1)$ (see also \cite[Problem 1]{BCC}).

In this paper, we announce a proof that their conjecture is correct. We will also show that the packing dimension of $\Sing(\pdim,\qdim)$ is the same as its Hausdorff dimension, thus answering a question of Bugeaud, Cheung, and Chevallier \cite[Problem 7]{BCC}. To summarize:
\begin{theorem}
\label{theoremsing}
For all $(\pdim,\qdim)\neq (1,1)$, we have
\[
\HD(\Sing(\pdim,\qdim)) = \PD(\Sing(\pdim,\qdim)) = \dimsing \df \dimprod\big(1 - \tfrac1{\dimsum}\big),
\]
where $\HD(S)$ and $\PD(S)$ denote the Hausdorff and packing dimensions of a set $S$, respectively.
\end{theorem}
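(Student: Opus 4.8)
The plan is to derive Theorem~\ref{theoremsing} from a \emph{variational principle}, which is the technical core of the paper; here we describe only how the theorem follows once that principle is in hand. The starting point is the Dani correspondence: set $u_\bfA = \left(\begin{smallmatrix} I_\pdim & \bfA\\ 0 & I_\qdim\end{smallmatrix}\right)$, let $g_t = \mathrm{diag}(e^{t/\pdim}I_\pdim,\ e^{-t/\qdim}I_\qdim)$, and let $L_i^\bfA(t) = \log\lambda_i(g_t u_\bfA\Z^\dimsum)$ denote the logarithmic successive minima of the associated lattice. Then $\bfA\in\Sing(\pdim,\qdim)$ if and only if $\lambda_1(g_t u_\bfA\Z^\dimsum)\to0$, i.e.\ $L_1^\bfA(t)\to-\infty$, as $t\to\infty$; thus, modulo the correspondence, $\Sing(\pdim,\qdim)$ is precisely the set of $\bfA$ whose successive-minima trajectory $\underline L^\bfA = (L_1^\bfA,\dots,L_\dimsum^\bfA)$ has its bottom coordinate diverging to $-\infty$.

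By the theory of Schmidt--Summerer and Roy, the trajectories $\underline L^\bfA$ are, up to a bounded additive error, exactly the \emph{$(\pdim,\qdim)$-templates}: nondecreasingly-ordered piecewise-linear tuples $(\mathsf P_1,\dots,\mathsf P_\dimsum)$ with individual slopes in $\{\tfrac1\pdim,-\tfrac1\qdim\}$ obeying Roy's constraints (the ordering, boundedness of $\sum_i\mathsf P_i$, and the interleaving rules governing where ascending and descending slots may sit). For a class $\mathcal C$ of templates let $\mathbf S(\mathcal C)$ be the set of matrices whose trajectory shadows some $\mathcal P\in\mathcal C$. Our variational principle asserts that
\[
\HD(\mathbf S(\mathcal C)) = \sup_{\mathcal P\in\mathcal C}\underline\delta(\mathcal P),\qquad \PD(\mathbf S(\mathcal C)) = \sup_{\mathcal P\in\mathcal C}\overline\delta(\mathcal P),
\]
where $\underline\delta(\mathcal P)\le\overline\delta(\mathcal P)$ are the $\liminf$- and $\limsup$-time-averages of a ``deviation density'' $d_{\mathcal P}(t)\in[0,\dimprod]$ equal to $\dimprod$ when $\mathcal P$ sits in the balanced $(\pdim,\qdim)$ configuration realized by a generic matrix and strictly smaller when $\mathcal P$ is forced into a constrained configuration. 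Taking $\mathcal C$ to be the class of templates whose bottom coordinate tends to $-\infty$, Theorem~\ref{theoremsing} reduces to the assertion that $\sup_{\mathcal C}\underline\delta = \sup_{\mathcal C}\overline\delta = \dimsing$.

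For the upper bound, a direct estimate with the interleaving constraints shows that any template with $\mathsf P_1\to-\infty$ has $d_{\mathcal P}$ of time-average at most $\dimprod\big(1-\tfrac1\dimsum\big)$ --- roughly because, since $\sum_i\mathsf P_i$ is bounded, driving $\mathsf P_1$ to $-\infty$ forces the template out of the balanced configuration on a set of times of density at least $\tfrac1\dimsum$ --- so $\overline\delta(\mathcal P)\le\dimprod\big(1-\tfrac1\dimsum\big)=\dimsing$. This reproves, and upgrades to the packing dimension, the bound of Kadyrov--Kleinbock--Lindenstrauss--Margulis. For the matching lower bound one exhibits an extremal template: an eventually self-similar $\mathcal P_0\in\mathcal C$ (periodic up to a downward translation) in which $\mathsf P_1$ drifts down as slowly as the constraints permit, so that $d_{\mathcal P_0}(t)\to\dimprod\big(1-\tfrac1\dimsum\big)$. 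Because this density converges, $\underline\delta(\mathcal P_0)=\overline\delta(\mathcal P_0)=\dimsing$; as $\mathcal P_0\in\mathcal C$, both suprema equal $\dimsing$, which with the upper bound proves Theorem~\ref{theoremsing}. (For $(\pdim,\qdim)=(\pdim,1)$ the template $\mathcal P_0$ recovers the extremal behavior behind the work of Cheung and of Cheung--Chevallier; the case $(1,1)$ is excluded because there $\Sing(1,1)=\Q$ has dimension $0$ whereas $\dimsing=\tfrac12$.)

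The real difficulty is the variational principle itself, and in particular the lower bounds $\HD(\mathbf S(\mathcal C))\ge\sup_{\mathcal C}\underline\delta$ and $\PD(\mathbf S(\mathcal C))\ge\sup_{\mathcal C}\overline\delta$. Establishing these requires, first, a quantitative converse to the Schmidt--Summerer--Roy theory --- given a template $\mathcal P$, one must construct a \emph{rich} family of matrices whose trajectories shadow $\mathcal P$, with explicit control over how the family branches from one scale to the next --- and second, an organization of that family into a self-affine Cantor-type subset of matrix space on which a mass-distribution (Frostman) estimate can be run. The feature that forces a separate treatment of the packing dimension --- and that one must confront even to know that $\HD=\PD$ on $\Sing(\pdim,\qdim)$ --- is that a naive realization of a template produces a set whose local scaling exponents oscillate between $\underline\delta(\mathcal P)$ and $\overline\delta(\mathcal P)$ unless $d_{\mathcal P}$ converges; controlling this oscillation, and verifying that the extremal singular template $\mathcal P_0$ is of the convergent (hence Hausdorff $=$ packing) type, is the crux of the argument.
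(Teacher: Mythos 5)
Your overall strategy --- Dani correspondence, templates, and a variational principle expressing $\HD$ and $\PD$ of a matrix family as $\sup\underline\delta$ and $\sup\overline\delta$ over the corresponding template class --- is precisely the paper's route, and the upper-bound heuristic is sound in spirit (making it precise requires a Lyapunov-type function such as $\phi=\max_j F_j/c_j$ with $\phi'\leq\dimsing-\delta(\ff,\cdot)$, not just bookkeeping with the interleaving constraints, but that is a matter of detail).

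The lower bound, however, fails as written. You posit a single extremal template $\mathcal P_0$, eventually self-similar (``periodic up to a downward translation''), whose density converges to $\dimsing$. There are two problems. First, $\delta(\ff,t)$ is integer-valued by construction --- it counts pairs $(i_+,i_-)\in S_+\times S_-$ with $i_+<i_-$ --- so it cannot converge to $\dimsing=\dimprod\big(1-\tfrac1\dimsum\big)$ unless that number is an integer (it is not, e.g., for $(\pdim,\qdim)=(2,1)$, where $\dimsing=4/3$). Second, and more fundamentally, any exponentially self-similar singular template has $\what\dynexp(\ff)=\liminf_{T\to\infty}(-f_1(T)/T)>0$, since $f_1$ drifts down linearly; but the sharpened form of the upper bound one actually proves is $\overline\delta(\ff)\leq\dimsing-\tfrac{\pdim^2\qdim}{\dimsum}\what\dynexp(\ff)$, so every such $\mathcal P_0$ has $\overline\delta(\mathcal P_0)<\dimsing$ strictly. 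Indeed the paper shows $\HD(\Sing_\dims(\omega))=\dimsing-\Theta\big(\sqrt{\omega-\pdim/\qdim}\,\big)$ for $\omega>\pdim/\qdim$ small: the supremum $\dimsing$ is approached but never attained along self-similar singular templates. The correct lower-bound argument constructs, for each small $\dynexp>0$, a self-similar template $\ff_\dynexp$ with $\what\dynexp(\ff_\dynexp)=\dynexp$ and $\underline\delta(\ff_\dynexp)=\dimsing-\Theta(\sqrt{\dynexp})$, and then takes $\sup_{\dynexp>0}$ inside the variational principle (equivalently, $\HD(\Sing(\pdim,\qdim))\geq\HD(\VSing(\pdim,\qdim))\geq\sup_{\omega>\pdim/\qdim}\HD(\Sing_\dims(\omega))=\dimsing$); the packing upper bound $\PD(\Sing(\pdim,\qdim))\leq\dimsing$ is proved separately by the Lyapunov estimate, and $\HD\leq\PD$ closes the loop. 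You should replace the single-template claim with this one-parameter family and its supremum.
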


\subsection{Dani correspondence}
The set of singular matrices is linked to homogeneous dynamics via the \emph{Dani correspondence principle}. For each $t\in\R$ and for each matrix $\bfA$, let
\begin{align*}
g_t &\df \left[\begin{array}{ll}
e^{t/\pdim} \mathrm I_\pdim &\\
& e^{-t/\qdim} \mathrm I_\qdim
\end{array}\right],&
u_\bfA &\df \left[\begin{array}{ll}
\mathrm I_\pdim & \bfA\\
& \mathrm I_\qdim
\end{array}\right],
\end{align*}
where $\mathrm I_k$ denotes the $k$-dimensional identity matrix. Finally, let $d = \pdim + \qdim$, and for each $j = 1,\ldots,d$, let $\lambda_j(\Lambda)$ denote the $j$th successive minimum of a lattice $\Lambda \subset \R^d$ (with respect to some fixed norm on $\R^d$), i.e. the infimum of $\lambda$ such that the set $\{\rr\in\Lambda : \|\rr\| \leq \lambda\}$ contains $j$ linearly independent vectors.. Then the Dani correspondence principle is a dictionary between the Diophantine properties of a matrix $\bfA$ on the one hand, and the dynamical properties of the orbit $(g_t u_\bfA \Z^d)_{t\geq 0}$ on the other. A particular example is the following result:

\begin{theorem}[{\cite[Theorem 2.14]{Dani4}}]
\label{theoremdani}
An $\pdim\times\qdim$ matrix $\bfA$ is singular if and only if the trajectory $(g_t u_\bfA \Z^d)_{t\geq 0}$ is divergent in the space of unimodular lattices in $\R^d$, or equivalently if
\[
\lim_{t\to\infty} \lambda_1(g_t u_\bfA \Z^d) = 0.
\]
\end{theorem}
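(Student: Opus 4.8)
The plan is to unwind both the Diophantine condition ``$\bfA$ singular'' and the dynamical condition ``$\lambda_1(g_t u_\bfA\Z^d)\to 0$'' into comparable statements about the shortest nonzero vector of the lattice $g_t u_\bfA\Z^d$, and then to pass between them via a continuous reparametrization relating the time $t$ to the height bound $Q$.

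First I would record the elementary computation underlying the correspondence. Writing a vector of $\Z^d$ as a pair $(\pp,\qq)$ with $\pp\in\Z^\pdim$ and $\qq\in\Z^\qdim$, one has $u_\bfA(\pp,\qq)=(\pp+\bfA\qq,\qq)$, and therefore
\[
g_t u_\bfA(\pp,\qq)=\bigl(e^{t/\pdim}(\pp+\bfA\qq),\ e^{-t/\qdim}\qq\bigr).
\]
Since every norm on $\R^d$ is comparable to $(x,y)\mapsto\max(\|x\|,\|y\|)$, it follows that, up to multiplicative constants depending only on the fixed norms,
\[
\lambda_1(g_t u_\bfA\Z^d)\asymp \inf_{(\pp,\qq)\in\Z^d\setminus\{0\}}\ \max\bigl(e^{t/\pdim}\|\pp+\bfA\qq\|,\ e^{-t/\qdim}\|\qq\|\bigr).
\]
The equivalence between the divergence of $(g_t u_\bfA\Z^d)_{t\ge 0}$ and $\lambda_1(g_t u_\bfA\Z^d)\to 0$ is then exactly Mahler's compactness criterion: a family of unimodular lattices is precompact in the space of unimodular lattices if and only if their first minima are bounded away from $0$.

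For the implication from singularity, fix $\epsilon>0$. For a parameter $v>0$ I would apply the definition of singularity with $Q=e^{t/\qdim}v$; the integer vectors it produces satisfy $e^{-t/\qdim}\|\qq\|\le v$ and, after the factors $e^{\pm t}$ cancel, $e^{t/\pdim}\|\pp+\bfA\qq\|\le\epsilon v^{-\qdim/\pdim}$, while $0<\|\qq\|$ forces $(\pp,\qq)\ne 0$. Balancing the two bounds gives the optimal choice $v=\epsilon^{\pdim/d}$, so that $\lambda_1(g_t u_\bfA\Z^d)\lesssim\epsilon^{\pdim/d}$ for every $t$ large enough that $Q=e^{t/\qdim}\epsilon^{\pdim/d}\ge Q_\epsilon$. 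As $\pdim/d$ is a fixed positive constant and $\epsilon$ is arbitrary, this yields $\lambda_1(g_t u_\bfA\Z^d)\to 0$.

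For the converse, assume $\lambda_1(g_t u_\bfA\Z^d)\to 0$ and fix $\epsilon>0$; let $\delta$ be a suitably small constant multiple of $\epsilon^{\pdim/d}$, and choose $T$ with $\lambda_1(g_t u_\bfA\Z^d)<\delta$ for all $t\ge T$. Given $Q$, pick $t$ so that $e^{t/\qdim}$ is the appropriate constant multiple of $Q/\delta$ (so $t\to\infty$ as $Q\to\infty$, hence $t\ge T$ once $Q\ge Q_\epsilon$); a shortest nonzero vector $(\pp,\qq)$ of $g_t u_\bfA\Z^d$ then satisfies $\|\qq\|\le Q$ and $\|\pp+\bfA\qq\|\le\epsilon Q^{-\qdim/\pdim}$ by the same computation run backwards together with the choice of $\delta$, and $\qq\ne 0$ because $\qq=0$ would force $e^{t/\pdim}\|\pp\|<\delta$ with $\pp$ a nonzero integer vector, impossible for $t$ large. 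This is precisely the definition of singularity. The only real content is pinning down the exponent $\pdim/d$ and, more delicately, ensuring that the estimates hold for \emph{every} large $t$ (respectively \emph{every} large $Q$) rather than merely along a sequence --- which is exactly what the continuous change of variables $Q\leftrightarrow e^{t/\qdim}$ accomplishes --- while all norm-equivalence constants are harmlessly absorbed into the relation between $\delta$ and $\epsilon$.
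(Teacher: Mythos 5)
Your proof is correct, and it is the standard Dani correspondence argument. Note, however, that the paper does not prove Theorem \ref{theoremdani} at all; it is stated with a citation to Dani's work \cite[Theorem 2.14]{Dani4}, so there is no in-paper proof to compare against. Your argument — unwinding $g_t u_\bfA(\pp,\qq)=\bigl(e^{t/\pdim}(\pp+\bfA\qq),\ e^{-t/\qdim}\qq\bigr)$, invoking Mahler's compactness criterion to pass between divergence and $\lambda_1\to 0$, and then trading between the Diophantine parameter $Q$ and the flow time $t$ via $Q\asymp e^{t/\qdim}\cdot(\text{const})$ with the balanced exponent $\epsilon^{\pdim/d}$ — is exactly the expected verification, and the bookkeeping (the constraint $\qq\ne 0$, the requirement that bounds hold for all large $t$ resp.\ all large $Q$, absorption of norm constants) is handled correctly.
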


It is natural to ask about the set of matrices such that the above limit occurs at a prescribed rate, such as the set of matrices such that $-\log\lambda_1(g_t u_\bfA \Z^d)$ grows linearly with respect to $t$. This question is closely linked with the concept of uniform exponents of irrationality. The \emph{uniform exponent of irrationality} of an $\pdim\times \qdim$ matrix $\bfA$, denoted $\what\omega(\bfA)$, is the supremum of $\omega$ such that for all $Q$ sufficiently large, there exist integer vectors $\pp\in \Z^\pdim$ and $\qq\in\Z^\qdim$ such that
\begin{align*}
\|\bfA \qq + \pp\| &\leq Q^{-\omega} \text{ and }
0 < \|\qq\| \leq Q.
\end{align*}
By Dirichlet's theorem, every $\pdim\times\qdim$ matrix $\bfA$ satisfies $\what\omega(\bfA) \geq \dirichlet$. Moreover, it is immediate from the definitions that any matrix $\bfA$ satisfying $\what\omega(\bfA) > \dirichlet$ is singular. We call a matrix \emph{very singular} if it satisfies the inequality $\what\omega(\bfA) > \dirichlet$, in analogy with the set of \emph{very well approximable} matrices, which satisfy a similar inequality for the regular (non-uniform) exponent of irrationality (see \eqref{VWA}). We denote the set of very singular $\pdim\times\qdim$ matrices by $\VSing(\pdim,\qdim)$. The relationship between uniform exponents of irrationality and very singular matrices on the one hand, and homogeneous dynamics on the other, is given as follows:

\begin{theorem}
\label{theoremdani2}
A matrix $\bfA$ is very singular if and only $\what\tau(\bfA) > 0$, where
\[
\what\dynexp(\bfA) \df \liminf_{t\to\infty} \frac{-1}t\log\lambda_1(g_t u_\bfA \Z^d).
\]
Moreover, the quantities $\tau = \what\tau(\bfA)$ and $\omega = \what\omega(\bfA)$ are related by the formula
\begin{equation}
\label{dani}
\tau = \frac1\qdim \frac{\omega - \dirichlet}{\omega + 1}\cdot
\end{equation}
\end{theorem}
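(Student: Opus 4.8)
The plan is to rewrite both $\what\omega(\bfA)$ and $\what\tau(\bfA)$ in terms of a single best-approximation function and then pass between them by an elementary change of variables. Since all norms on $\R^d$ are bi-Lipschitz equivalent and $-\tfrac1t\log$ of a bounded multiplicative factor tends to $0$, the value of $\what\tau(\bfA)$ does not depend on which norm is used to define $\lambda_1$; I would therefore compute it using the norm on $\R^d=\R^\pdim\times\R^\qdim$ obtained as the maximum of the two fixed norms on the factors. With respect to this norm every nonzero vector of $u_\bfA\Z^d$ has the shape $(\bfA\qq+\pp,\qq)$ with $(\pp,\qq)\in\Z^\pdim\times\Z^\qdim$, and its $g_t$-image has length $\max\!\big(e^{t/\pdim}\|\bfA\qq+\pp\|,\ e^{-t/\qdim}\|\qq\|\big)$.

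Introduce the best-approximation function $\psi(Q)\df\min\{\|\bfA\qq+\pp\|:\pp\in\Z^\pdim,\ \qq\in\Z^\qdim,\ 0<\|\qq\|\le Q\}$, so that the definition of the uniform exponent rewrites as $\what\omega(\bfA)=\liminf_{Q\to\infty}\tfrac{-\log\psi(Q)}{\log Q}$. The crux is the elementary equivalence: for every $\epsilon>0$ and every $t$ large enough in terms of $\epsilon$, $\lambda_1(g_t u_\bfA\Z^d)\le\epsilon$ if and only if $\psi(\epsilon e^{t/\qdim})\le\epsilon e^{-t/\pdim}$. This is seen by observing that for $t$ large a vector of length at most $\epsilon$ cannot have $\qq=0$ (a nonzero integer vector has norm bounded below), so the minimum over $(\pp,\qq)$ collapses to the minimum over $\qq\neq 0$ with $\pp$ chosen optimally, which is exactly what $\psi$ records. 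The same observation shows $\lambda_1(g_t u_\bfA\Z^d)\ge c_0 e^{-t/\qdim}$ for $t$ large, where $c_0$ is the minimal norm of a nonzero element of $\Z^\qdim$; together with the trivial bound $\lambda_1(g_t u_\bfA\Z^d)\le C_d$ (Minkowski, the lattice being unimodular), this gives $0\le\what\tau(\bfA)\le 1/\qdim$.

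Now I would substitute $\epsilon=e^{-\tau t}$ for a parameter $\tau\in(0,1/\qdim)$ and put $Q=\epsilon e^{t/\qdim}=e^{t(1/\qdim-\tau)}$, which runs over the large values of $Q$ as an increasing bijection of the large values of $t$. The equivalence above becomes: $\lambda_1(g_t u_\bfA\Z^d)\le e^{-\tau t}$ for all sufficiently large $t$ if and only if $\psi(Q)\le Q^{-\theta(\tau)}$ for all sufficiently large $Q$, where $\theta(\tau)\df\tfrac{1/\pdim+\tau}{1/\qdim-\tau}$. Since $\theta$ is an increasing homeomorphism of $[0,1/\qdim)$ onto $[\dirichlet,\infty)$ with $\theta(0)=\dirichlet$, taking suprema over the two parameters gives $\what\tau(\bfA)=\theta^{-1}\big(\what\omega(\bfA)\big)$, where $\theta^{-1}(\infty)\df 1/\qdim$ (consistent with the bound above; the case $\what\omega(\bfA)=\infty$ being checked directly). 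Solving $\theta(\tau)=\omega$ for $\tau$ rearranges precisely to \eqref{dani}, and since $\theta^{-1}$ is increasing with $\theta^{-1}(\dirichlet)=0$, this yields $\what\tau(\bfA)>0$ if and only if $\what\omega(\bfA)>\dirichlet$, that is, if and only if $\bfA$ is very singular.

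I expect the only genuine subtlety to be bookkeeping: carefully matching, across the substitution $t\leftrightarrow Q$, the quantifiers over sufficiently large $t$ and sufficiently large $Q$, and disposing of the boundary value $\omega=\dirichlet$ (where $\tau=0$) together with the degenerate case $\what\omega(\bfA)=+\infty$. The substantive step — reducing $\lambda_1$ of the orbit to the best-approximation function $\psi$ — is a routine instance of the Dani correspondence.
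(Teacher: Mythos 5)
Your argument is correct and is precisely the ``straightforward example of the Dani correspondence principle'' that the paper alludes to without supplying a proof: reduce $\lambda_1(g_t u_\bfA\Z^d)$ to the best-approximation function $\psi$ via the product norm and the observation that for large $t$ the minimizer must have $\qq\neq\0$, then change variables $\epsilon=e^{-\tau t}$, $Q=\epsilon e^{t/\qdim}$. The bookkeeping you flag (matching the ``sufficiently large'' quantifiers under the increasing bijection $t\mapsto Q$ when $\tau<1/\qdim$, the degenerate endpoint $\what\omega=\infty$, and the exclusion of $\qq=\0$ for $t$ beyond a $\tau$-dependent threshold) all checks out, and solving $\theta(\tau)=\omega$ with $\theta(\tau)=\tfrac{1/\pdim+\tau}{1/\qdim-\tau}$ does give \eqref{dani}.
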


This theorem is a straightforward example of the Dani correspondence principle and is probably well-known, but we have not been able to find a reference.

\subsection{Dimensions of very singular matrices}
Perhaps unsurprisingly, the set of very singular matrices has the same dimension properties as the set of singular matrices.

\begin{theorem}
\label{theoremvsing}
For all $(\pdim,\qdim)\neq (1,1)$, we have
\[
\HD(\VSing(m,n)) = \PD(\VSing(m,n)) = \dimsing.
\]
\end{theorem}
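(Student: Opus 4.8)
The plan is to deduce Theorem~\ref{theoremvsing} from Theorem~\ref{theoremsing} by sandwiching $\VSing(m,n)$ between two sets whose dimensions are already under control. The upper bound is immediate: every very singular matrix is singular (as noted right after the definition, $\what\omega(\bfA) > \dirichlet$ forces $\bfA \in \Sing(m,n)$), so $\VSing(m,n) \subseteq \Sing(m,n)$ and hence both dimensions of $\VSing(m,n)$ are at most $\dimsing$ by Theorem~\ref{theoremsing}. The substance is therefore the lower bound $\HD(\VSing(m,n)) \geq \dimsing$, since $\PD \geq \HD$ always and the packing dimension is then forced to equal $\dimsing$ by the upper bound.

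For the lower bound I would reuse the construction that proves the lower bound half of Theorem~\ref{theoremsing}. That construction (via the variational principle announced in the abstract, working in the parametric geometry of numbers) should produce, for every $\tbeta < \dimsing$, a Cantor-like subset of $\Sing(m,n)$ of Hausdorff dimension at least $\tbeta$; the point is that such sets are built by prescribing the behavior of the successive-minima functions $t \mapsto -\tfrac1t\log\lambda_j(g_t u_\bfA \Z^d)$ along a template, and one has freedom in choosing the template. The key step is to choose the template so that it stays uniformly bounded away from $0$ from below for $\lambda_1$, i.e. so that $\liminf_{t\to\infty}\big(-\tfrac1t\log\lambda_1(g_t u_\bfA \Z^d)\big) > 0$ for every $\bfA$ in the Cantor set. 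By Theorem~\ref{theoremdani2} this $\liminf$ being positive is exactly the condition $\what\tau(\bfA) > 0$, equivalently $\bfA \in \VSing(m,n)$. Since the divergent-trajectory templates used in the proof of Theorem~\ref{theoremsing} already have $\lambda_1$-component tending to $-\infty$ (in the $-\log$ normalization) at least linearly — or can be perturbed to do so without lowering the dimension of the resulting set — this should cost nothing: the same Cantor set, or an arbitrarily small modification of it, already lies in $\VSing(m,n)$.

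Concretely, the steps in order: (i) recall the lower-bound mechanism from the proof of Theorem~\ref{theoremsing}, isolating the template-valued parameter and the dimension estimate it yields; (ii) verify that the admissible templates witnessing dimension $\tbeta$ can be taken to have linear (not merely superlinear-to-infinity) decay of $\lambda_1$, so that the associated matrices satisfy $\what\tau > 0$; (iii) invoke Theorem~\ref{theoremdani2} to translate $\what\tau(\bfA) > 0$ into $\what\omega(\bfA) > \dirichlet$, placing the Cantor set inside $\VSing(m,n)$ and giving $\HD(\VSing(m,n)) \geq \tbeta$; (iv) let $\tbeta \uparrow \dimsing$ and combine with the trivial upper bound and $\HD \leq \PD$ to conclude. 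The main obstacle is step (ii): one must check that the optimal or near-optimal templates in the variational principle — the ones that push the dimension all the way up to $\dimsing$ — are compatible with a uniform linear lower bound on the first successive minimum, rather than templates that achieve dimension $\dimsing$ only in a limiting regime where $\what\tau \to 0$. If it turns out that the extremal templates genuinely force $\what\tau \to 0$, one would instead need a quantitative version: templates with $\what\tau \geq \eta$ for a fixed small $\eta > 0$ achieving dimension at least $\dimsing - o_\eta(1)$, which again suffices after letting $\eta \to 0$. Either way the argument is short once the machinery behind Theorem~\ref{theoremsing} is in place.
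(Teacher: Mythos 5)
Your proposal is correct and matches the paper's mechanism, including the obstacle you flag at step~(ii). Two remarks on how the paper organizes the same content. First, your fallback is actually the main route: the extremal templates for the variational principle genuinely force $\what\tau \to 0$ (increasing $\what\tau$ strictly decreases the achievable dimension), so there is no template with fixed positive $\what\tau$ attaining dimension exactly $\dimsing$. The paper therefore proves the quantitative statement you describe as Theorem~\ref{theoremhsmall}: $\HD(\Sing_\dims(\omega)) = \dimsing - \Theta\big(\sqrt{\omega - \dirichlet}\big)$ for $\omega$ slightly above $\dirichlet$, and then $\HD(\VSing) \geq \dimsing$ follows by letting $\omega \to \dirichlet^+$, since each $\Sing_\dims(\omega)$ with $\omega > \dirichlet$ lies inside $\VSing$. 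Second, the logical arrow between Theorem~\ref{theoremsing} and Theorem~\ref{theoremvsing} is reversed relative to your write-up: the paper does not first prove $\HD(\Sing) \geq \dimsing$ and then refine a $\Sing$-construction to land in $\VSing$. Instead it proves $\HD(\VSing) \geq \dimsing$ directly (via Theorem~\ref{theoremhsmall}) and $\PD(\Sing) \leq \dimsing$ directly via the variational principle, and then \emph{both} Theorems~\ref{theoremsing} and~\ref{theoremvsing} follow from the sandwich $\dimsing \leq \HD(\VSing) \leq \HD(\Sing) \leq \PD(\Sing) \leq \dimsing$ and likewise for $\VSing$. So you cannot take Theorem~\ref{theoremsing} as a given input to Theorem~\ref{theoremvsing} without circularity; rather the $\VSing$ lower bound is the primitive fact and Theorem~\ref{theoremsing} is its corollary. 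With that reorganization your argument is exactly the paper's.
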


One can also ask for more precise results regarding the function $\what\omega$. Specifically, for each $\omega > \dirichlet$ we can consider the levelset\Footnote{For results considering the superlevelset, see Theorem \ref{theoremvariational5}.}
\[
\Sing_\dims(\omega) \df \{\bfA : \what\omega(\bfA) = \omega\} = \{\bfA : \what\tau(\bfA) = \tau\},
\]
where $\tau$ is given by \eqref{dani}. It would be desirable to give precise formulas for the Hausdorff and packing dimensions of $\Sing_\dims(\omega)$ in terms of $\omega$, $\pdim$, and $\qdim$, see e.g. \cite[Problem 2]{BCC}. However, this appears quite challenging at the present juncture, though we have made significant progress towards this question which we will describe in the next section. Thus, instead of precise formulas we will give asymptotic formulas of two types: estimates valid when $\omega$ is small and estimates valid when $\omega$ is large. Note that while the minimum value of $\what\omega$ is always $\dirichlet$ (corresponding to $\what\tau = 0$), the maximum value depends on whether or not $\qdim$ is at least $2$. If $\qdim \geq 2$, then the maximum value of $\what\omega$ is $\infty$ (corresponding to $\what\tau = \frac1\qdim$), while if $\qdim = 1$, then the maximum value of $\what\omega$ (excluding rational points) is $1$ (corresponding to $\what\tau = \frac{\pdim-1}{2\pdim}$). Consequently, we have two different asymptotic estimates of the dimensions of $\Sing_\dims(\omega)$ when $\omega$ is large corresponding to these two cases.

Recall that $\Theta(x)$ denotes any number such that $x/C \leq \Theta(x) \leq C x$ for some uniform constant $C$. In all of the formulas below, $\tau$ is given by \eqref{dani}.

\begin{theorem}
\label{theoremhsmall}
Suppose that $(\pdim,\qdim) \neq (1,1)$. Then for all $\omega > \dirichlet$ sufficiently close to $\dirichlet$, we have\vspace{-0.23in}
\begin{align*}
\phantom{\HD(\Sing_\dims(\omega))} &\hspace{2 in}&
\phantom{\PD(\Sing_\dims(\omega))} &\hspace{1.2 in}\\
\HD(\Sing_\dims(\omega)) &= \dimsing - \Theta\left(\sqrt{\omega - \dirichlet}\right)&
\PD(\Sing_\dims(\omega)) &= \dimsing - \Theta\Big(\omega - \dirichlet\Big)\\
&= \dimsing - \Theta\left(\sqrt{\tau}\,\right)&
&= \dimsing - \Theta\left(\tau\right)
\end{align*}
unless $\pdim = \qdim = 2$, in which case\vspace{-0.23in}
\begin{align*}
\phantom{\HD(\Sing_\dims(\omega))} &\hspace{2 in}&
\phantom{\PD(\Sing_\dims(\omega))} &\hspace{1.2 in}\\
\HD(\Sing_\dims(\omega)) &= \dimsing - \Theta\Big(\omega - \dirichlet\Big)&
\PD(\Sing_\dims(\omega)) &= \dimsing - \Theta\Big(\omega - \dirichlet\Big)\\
&= \dimsing - \Theta\left(\tau\right)&
&= \dimsing - \Theta\left(\tau\right).
\end{align*}
\end{theorem}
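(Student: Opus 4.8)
The plan is to derive Theorem \ref{theoremhsmall} from the variational principle announced in the next section (the "significant progress" alluded to just before the statement), which should express $\HD(\Sing_\dims(\omega))$ and $\PD(\Sing_\dims(\omega))$ as the suprema of certain dimension functionals over spaces of admissible \emph{templates} (piecewise-linear trajectories of successive-minima functions compatible with the prescribed value $\what\tau(\bfA)=\tau$). Via the Dani correspondence (Theorems \ref{theoremdani} and \ref{theoremdani2}), the condition $\what\omega(\bfA)=\omega$, equivalently $\what\tau(\bfA)=\tau$ with $\tau$ given by \eqref{dani}, translates into a linear-rate constraint on $-\tfrac1t\log\lambda_1(g_t u_\bfA\Z^d)$, so the levelset $\Sing_\dims(\omega)$ corresponds to templates whose first component has $\liminf$ slope exactly $-\tau$. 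The quantities $\HD$ and $\PD$ then become, respectively, a "worst-case" (infimum over time of an instantaneous dimension) and a "best-case" (supremum, via a more generous $\limsup$-type averaging) optimization of an explicit piecewise-linear dimension functional in the $d-1$ slope parameters of the template. This is exactly the structural source of the asymmetry one sees in the theorem: the Hausdorff side is governed by a min-max and the packing side by a max-max.

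The core of the argument is then a perturbative analysis of this finite-dimensional optimization near $\tau=0$. At $\tau = 0$ the optimum is attained by the "balanced" template, yielding the value $\dimsing$. First I would write the dimension functional $F(\mathbf{x})$ explicitly (it is concave and piecewise-linear in the slope vector $\mathbf{x}$, with the constraint set a polytope depending affinely on $\tau$), identify the optimizer $\mathbf{x}_0$ at $\tau=0$, and compute the behavior of the constrained optimum as the feasible polytope is deformed by $O(\tau)$. For the packing dimension the relevant functional is smooth (linear) in the neighborhood of $\mathbf{x}_0$ and the constraint moves linearly, so the optimal value drops by $\Theta(\tau)$ — this gives the $\PD$ asymptotics immediately, and since $\tau = \Theta(\omega-\dirichlet)$ near $\dirichlet$ by \eqref{dani} (the map $\omega\mapsto\tau$ is a diffeomorphism with nonzero derivative at $\omega=\dirichlet$), equivalently $\dimsing - \Theta(\omega - \dirichlet)$. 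For the Hausdorff dimension the min-max structure forces the optimizer to trade off two (or more) competing instantaneous-dimension expressions whose values cross transversally at $\mathbf{x}_0$; balancing them under a constraint that moves by $O(\tau)$ typically forces a displacement of order $\sqrt\tau$ in $\mathbf{x}$, hence a loss of order $\sqrt\tau = \Theta(\sqrt{\omega-\dirichlet})$ in the functional. The exceptional case $\pdim=\qdim=2$ is precisely where this transversality degenerates: by the $m\leftrightarrow n$ symmetry and the small dimension, the two competing branches coincide to higher order (or the relevant Hessian block vanishes), so the min-max behaves like the max-max and one gets $\Theta(\tau)$ on both sides. I would verify by direct computation that $d=4$, $m=n=2$ is the unique pair where the leading quadratic form in the Hausdorff optimization is singular.

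Concretely the steps are: (i) invoke the variational principle to reduce both dimensions to optimization over templates with $\liminf$-slope $-\tau$ in the first coordinate; (ii) reduce to a finite-dimensional problem by showing the optimizing templates may be taken eventually periodic (or piecewise-linear with boundedly many pieces), so the functionals become explicit; (iii) record the $\tau=0$ optimum and its value $\dimsing$, and check the optimizer is unique and lies in the interior of the relevant linear piece for $\PD$ but at a "corner" (equality of two branches) for $\HD$; (iv) carry out the perturbation: linear response $\Rightarrow$ $\Theta(\tau)$ for $\PD$; constrained balancing of two transversal branches $\Rightarrow$ $\Theta(\sqrt\tau)$ for $\HD$ in general; (v) compute the degenerate Hessian in the case $m=n=2$ to see the rate improves to $\Theta(\tau)$; (vi) translate $\tau = \Theta(\omega - \dirichlet)$ via \eqref{dani} and also record the $\Theta(\sqrt\tau)$, $\Theta(\tau)$ forms. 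The main obstacle I anticipate is step (iv)–(v): making the "balancing two branches costs $\sqrt\tau$" heuristic rigorous requires controlling the optimization uniformly — showing the optimizer genuinely sits at the branch crossing for all small $\tau$ (not jumping to a different face of the polytope), producing matching upper and lower bounds on the loss (both directions of $\Theta$), and pinning down the precise algebraic condition that singles out $(\pdim,\qdim)=(2,2)$ as the sole exception. A secondary subtlety is that for $\qdim=1$ the template space is more constrained (the successive minima satisfy extra rigidity), so one must check the same perturbative picture survives; but since the claimed rates in Theorem \ref{theoremhsmall} do not distinguish $\qdim=1$ except through the $(2,2)$ carve-out, this should amount to verifying the generic transversality still holds there.
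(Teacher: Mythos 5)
Your outline, centered on a perturbative analysis of a template-dimension functional near the balanced optimizer at $\tau = 0$, captures the shape of the paper's \emph{lower}-bound constructions but leaves the \emph{upper} bounds --- the other half of every $\Theta$ --- essentially untouched. A local perturbation around one candidate optimizer can produce near-optimal templates and hence $\HD(\Sing_\dims(\omega)) \geq \dimsing - C\sqrt\tau$ and $\PD(\Sing_\dims(\omega)) \geq \dimsing - C\tau$, but it cannot by itself show that \emph{no} template $\ff$ with $\what\dynexp(\ff) = \dynexp$ does better; that is a global statement about the whole template space. The paper proves the upper bounds by an entirely non-perturbative device with no analogue in your plan: it introduces the auxiliary potential $\phi = \max_{1\le j\le d-1} F_j/c_j$ with explicit weights $c_j$ (proportional to $j$ for $j\le\pdim$ and to $\dimsum-j$ for $j\ge\pdim$), proves the pointwise differential inequality $\phi'(t) \leq \dimsing - \delta(\ff,t)$ valid for every template $\ff$ with $f_1\to-\infty$, and then, for Hausdorff dimension, adds a ``conservation of mass'' argument showing that the exceptional set $\{t : \phi'(t) = \dimsing - \delta(\ff,t)\}$ must occupy a positive proportion of every sufficiently long interval, which is exactly what forces $\dimsing - \underline\delta(\ff) \gtrsim \sqrt{\what\dynexp(\ff)}$. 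You flag matching both directions of $\Theta$ as a difficulty, but your proposal offers no mechanism for the $\leq$ direction; this is a missing idea, not a uniformity issue.

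Two further asserted steps do not correspond to what is done or needed. The reduction ``optimizing templates may be taken eventually periodic with boundedly many pieces'' is neither proved nor used: for the lower bound the paper simply writes down one explicit one-parameter family of exponentially equivariant templates and optimizes a single scalar by AM--GM, balancing $\pdim\gamma$ against $\dynexp/\gamma$ to get $\gamma\asymp\sqrt{\dynexp}$ and loss $\asymp\sqrt{\dynexp}$; for the upper bound the $\phi$-argument applies directly to arbitrary templates with no finite-dimensional reduction at all. And the $(\pdim,\qdim)=(2,2)$ exception does not appear as a degenerate Hessian in a slope optimization. The paper's construction uses subintervals of length proportional to $(\pdim-1)^{-1}$ and $(\qdim-1)^{-1}$, and when both equal $1$ there is an alternate template (shifting $S_-$ among $\{2,4\}$, $\{1,3\}$, $\{1,2\}$) avoiding the $\sqrt\tau$ loss; in the upper bound the corresponding special configuration is the ``almost free-fall'' case $\diff_-(0,j) = \diff_+(j,d) = 1$ with $j = \pdim$, which the case analysis must permit precisely when $\pdim=\qdim=2$.
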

\vspace{.05 in}
\begin{theorem}
\label{theoremn2}
Suppose that $\qdim \geq 2$. Then for all $\omega<\infty$ sufficiently large, we have\vspace{-0.23in}
\begin{align*}
\phantom{\HD(\Sing_\dims(\omega))} &\hspace{2 in}&
\phantom{\PD(\Sing_\dims(\omega))} &\hspace{1.2 in}\\
\HD(\Sing_\dims(\omega)) &= \dimprod - 2\pdim + \Theta\left(\tfrac1\omega\right)&
\PD(\Sing_\dims(\omega)) &= \dimprod - \pdim.\\
&= \dimprod - 2\pdim + \Theta\left(\tfrac1\qdim - \tau\right)
\end{align*}
\end{theorem}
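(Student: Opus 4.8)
The plan is to obtain Theorem~\ref{theoremn2} from the variational principle of this paper, which expresses both $\HD(\Sing_\dims(\omega))$ and $\PD(\Sing_\dims(\omega))$ as suprema of explicit dimension functionals over a class of admissible templates. By Theorems~\ref{theoremdani} and~\ref{theoremdani2} the Diophantine condition $\what\omega(\bfA)=\omega$ is equivalent to the growth condition $\what\tau(\bfA)=\liminf_{t\to\infty}\big(-\tfrac1t\log\lambda_1(g_t u_\bfA\Z^d)\big)=\tau$ with $\tau$ given by~\eqref{dani}, so the relevant class is the family $\mathcal T_\omega$ of admissible templates $f=(f_1,\dots,f_d)$ --- piecewise linear maps satisfying the structural conditions of the parametric geometry of numbers --- with $\what\tau(f):=\liminf_{t\to\infty}(-f_1(t)/t)=\tau$. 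On this class the variational principle gives $\HD(\Sing_\dims(\omega))=\sup_{f\in\mathcal T_\omega}\underline\delta(f)$ and $\PD(\Sing_\dims(\omega))=\sup_{f\in\mathcal T_\omega}\overline\delta(f)$, where $\underline\delta$ and $\overline\delta$ are respectively the liminf and the limsup of the Ces\`aro averages of a local dimension function along the template. Thus the content of Theorem~\ref{theoremn2} is precisely the evaluation of these two suprema when $\tau$ is close to its maximal value $\tfrac1\qdim$; the two displayed forms of the Hausdorff estimate are reconciled using the identity $\tfrac1\qdim-\tau=\tfrac{\dimsum}{\dimprod(\omega+1)}$, immediate from~\eqref{dani}, which shows $\tfrac1\qdim-\tau=\Theta(\tfrac1\omega)$ as $\omega\to\infty$.

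For the packing dimension the claim is that $\sup_{f\in\mathcal T_\omega}\overline\delta(f)=\dimprod-\pdim$ for every sufficiently large $\omega$. For the upper bound one estimates $\overline\delta(f)$ for an arbitrary $f\in\mathcal T_\omega$: since $\what\tau(f)=\tau$ forces $-f_1(t)\ge(\tau-\epsilon)t$ for all large $t$, the bottom coordinate of the template drifts downward at a definite rate, and a bookkeeping argument over the successive minima --- tracking how this drift is forced, through the trace constraint $\sum_j f_j\approx 0$, to push the remaining coordinates upward --- caps the Ces\`aro average of the local dimension by $\dimprod-\pdim$ once $\tau$ is close enough to $\tfrac1\qdim$. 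For the matching lower bound one constructs a single template in $\mathcal T_\omega$ whose Ces\`aro-averaged local dimension tends to $\dimprod-\pdim$ along a sparse sequence of times $t_k\to\infty$, while still satisfying $-f_1(t)\ge(\tau-\epsilon)t$ at every time and $-f_1(t_k)/t_k\to\tau$; because $\overline\delta$ records only a limsup, the dimension lost while restoring the constraint between consecutive $t_k$ is invisible, and the precise value of $\tau$ (beyond being large) plays no role --- which is exactly why the packing dimension stabilizes at the constant $\dimprod-\pdim$ rather than continuing to decay as in Theorem~\ref{theoremhsmall}.

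For the Hausdorff dimension the liminf functional $\underline\delta$ forces the near-extremal templates to be essentially rigid, and the plan is to show that in this regime $\sup_{f\in\mathcal T_\omega}\underline\delta(f)$ is attained, up to the asserted $\Theta$-error, by an (eventually) self-similar template: one whose coordinates all move at constant slopes, with $f_1$ descending at rate $\tau$ and the remaining coordinates arranged so as to maximize the local dimension subject to the trace and structural constraints. An exchange argument --- averaging the slopes of any competitor, using concavity of the local dimension function together with the structural conditions --- reduces the problem to maximizing $\underline\delta$ over this finite-dimensional family, yielding an explicit value $\Phi(\tau)$ that one then analyzes near $\tau=\tfrac1\qdim$. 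That endpoint is the value at which $f_1$ would have to descend at the exceptional rate $-\tfrac1\qdim$ (an element of $\exceptionalset$), where the variational problem degenerates and $\Phi(\tau)\to\dimprod-2\pdim$; a first-order expansion --- exploiting that for $\omega<\infty$ the rate $-\tau$ lies at distance $\Theta(\tfrac1\omega)$ from $\exceptionalset$ --- gives $\Phi(\tau)=\dimprod-2\pdim+\Theta(\tfrac1\qdim-\tau)$ with a coefficient lying between two positive constants, which together with the identity from~\eqref{dani} yields both forms of the estimate.

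The main obstacle is the Hausdorff half, and within it two steps require genuine work. The first is proving that the self-similar family really is extremal for $\underline\delta$ among \emph{all} admissible templates with $\what\tau(f)=\tau$: a priori a periodic or multi-phase template --- alternating between a near-generic phase and a highly singular phase --- might achieve a larger liminf-average, and ruling this out calls for a rigidity argument exploiting the interaction between the liminf structure of $\underline\delta$, the concavity of the local dimension function, and its behaviour near the exceptional rates. The second is showing that the linear coefficient in the expansion of $\Phi$ at $\tau=\tfrac1\qdim$ is of exact order $1$ uniformly in $\pdim$ and $\qdim$, which is what upgrades an asymptotic equality to the uniform $\Theta(\cdot)$ asserted. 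By comparison the packing half is comparatively soft; the only delicate point there is arranging the lower-bound construction so that the constraint $-f_1(t)\ge(\tau-\epsilon)t$ holds at \emph{every} time and not merely along the sparse sequence of times witnessing the limsup, and this is the sole place where the hypothesis that $\omega$ --- equivalently $\tau$ --- is large gets used.
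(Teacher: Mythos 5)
Your architecture is right --- reduce via the variational principle to optimizing $\underline\delta,\overline\delta$ over templates with prescribed uniform dynamical exponent, build near-optimal templates for the lower bounds, bound the functionals for the upper bounds --- and your Hausdorff lower-bound template (constant slopes, $f_1$ descending at rate $\tau$) is essentially what the paper constructs. The divergence, and the real gap, is in the two upper bounds.

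For the Hausdorff upper bound you propose to prove that (eventually) self-similar templates are extremal for $\underline\delta$ via an exchange/averaging argument, and you correctly flag this rigidity step as the main obstacle. It is: $\underline\delta$ is a liminf of Ces\`aro averages, not monotone under local perturbation, and sensitive to the quantization of slopes near $\exceptionalset$, so ruling out multi-phase competitors (near-generic phase alternating with a nearly-singular phase) is not a softness argument. The paper bypasses this entirely with a direct estimate valid for \emph{every} non-trivially-singular template: fix $T$ with $f_1(T)=f_2(T)\le -\tau T$, note that at times where $f_1'=f_2'=-\tfrac1\qdim$ one has $\dimprod-\delta(\ff,t)\ge 2\pdim$, while at times where some $f_i'>-\tfrac1\qdim$ one has $f_1'+f_2'\ge -\tfrac2\qdim+\tfrac{\dimsum}{\dimprod(\qdim+1)}$ and merely $\dimprod-\delta\ge 0$, combine these into a single affine inequality $\dimprod-\delta(\ff,t)\ge 2\pdim - C\bigl[f_1'(t)+f_2'(t)+\tfrac2\qdim\bigr]$, and average over $[0,T]$. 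This yields $\underline\delta(\ff)\le\dimprod-2\pdim+O(\tfrac1\qdim-\tau)$ with no reduction to a canonical family. (The paper \emph{does} carry out an exchange reduction to exponentially periodic templates in the $1\times 2$ case, Theorem~\ref{theoremspecialcase}, but it uses a normalization at return times that is specific to $d=3$; it is not clear the argument transfers for general $\qdim\ge 2$.)

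For the packing upper bound your ``bookkeeping'' sketch does not meet the difficulty that $\overline\delta$ only records a limsup, so the downward drift of $f_1$ cannot be averaged in the way it can for $\underline\delta$. The paper's argument is by contradiction at a local maximum $T$ of $\Delta(\ff,\cdot)$: if $\Delta(\ff,T)>\dimprod-\pdim$, the terminal interval of linearity has codimension $<\pdim$, forcing $f_1'>-\tfrac1\qdim$ and hence $f_1'\ge -\tfrac1\qdim+\tfrac1{\dimprod d^2}$ by slope quantization; this persists up to the first $T'\ge T$ with $f_1(T')=f_2(T')$, so the exponent constraint forces $T'-T=O(xT)$ with $x=\tfrac1\qdim-\tau$, whence $\Delta(\ff,T)=\Delta(\ff,T')+O(x)\le\dimprod-2\pdim+O(x)$ by the Hausdorff estimate --- a contradiction for $x$ small. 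You also have the role of the large-$\omega$ hypothesis reversed: the packing \emph{lower} bound $\PD\ge\dimprod-\pdim$ holds for all $\tau\in(0,\tfrac1\qdim)$ by the split--transfer--merge construction, whereas the \emph{upper} bound $\PD\le\dimprod-\pdim$ genuinely needs $\tau$ near $\tfrac1\qdim$ (for small $\tau$, Theorem~\ref{theoremhsmall} gives $\PD$ near $\dimsing>\dimprod-\pdim$).
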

\vspace{0 in}
\begin{theorem}
\label{theoremn1}
Suppose that $\qdim = 1$ and $\pdim\geq 2$. Then for all $\omega<1$ sufficiently close to $1$, we have\vspace{-0.23in}
\begin{align*}
\phantom{\HD(\Sing_\dims(\omega))} &\hspace{2 in}&
\phantom{\PD(\Sing_\dims(\omega))} &\hspace{1.2 in}\\
\HD(\Sing_\dims(\omega)) &= \Theta\left(1 - \omega\right)&
\PD(\Sing_\dims(\omega)) &= 1.\\
&= \Theta\Big(\tfrac{\pdim-1}{2\pdim} - \tau\Big)
\end{align*}
\end{theorem}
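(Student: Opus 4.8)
\emph{Proof proposal.} The plan is to derive Theorem~\ref{theoremn1} from our variational principle, which converts both dimension computations into extremal problems over the space of templates. I would first use Theorem~\ref{theoremdani2} to rewrite $\Sing_\dims(\omega) = \{\bfA : \what\dynexp(\bfA) = \tau\}$ with $\tau$ as in \eqref{dani}; since $\qdim = 1$, the quantity $\tstar := \tfrac{\pdim-1}{2\pdim}$ is the maximum of $\what\dynexp$ at irrational points, and as $\omega \to 1^-$ one has $\tau \to \tstar$ with $\tstar - \tau = \Theta(1-\omega)$ by a one-line computation from \eqref{dani}, which already shows that the two asymptotic expressions for $\HD$ in the statement agree. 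The condition $\what\dynexp(\bfA) = \tau$ depends only on the template $F$ recording the successive minima $L_j(t) = \log\lambda_j(g_t u_\bfA \Z^\Dimsum)$, and reads $\liminf_{t\to\infty}(-L_1(t)/t) = \tau$. The variational principle then gives
\[
\HD(\Sing_\dims(\omega)) = \sup_F \underline\delta(F), \qquad \PD(\Sing_\dims(\omega)) = \sup_F \overline\delta(F),
\]
the suprema being over templates $F$ with $\liminf_{t\to\infty}(-L_1(t)/t) = \tau$, where $\underline\delta(F)$ and $\overline\delta(F)$ are the lower and upper limits of $\tfrac1T\int_0^T \Delta(F(t),F'(t))\,dt$ as $T\to\infty$, for an explicit bounded dimension density $\Delta$.

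For the Hausdorff part the point is that the $\liminf$ condition is near its extreme. There is an essentially unique periodic template $F_*$ with $\what\dynexp(F_*) = \tstar$ --- along it $L_1$ alternates in equal proportions between descending at the maximal rate $1$ and ascending at the maximal rate $\tfrac1\pdim$, so that $\tstar$ is the average of these two slopes --- and $\Delta$ vanishes along $F_*$. Because $\tstar$ is the maximum of $\liminf(-L_1/t)$, imposing $\what\dynexp(F) = \tau$ leaves only $\tstar - \tau$ of ``slack'': sustaining a positive average of $\Delta$ along $F$ costs a proportional amount of this slack, so $\underline\delta(F) = O(\tstar - \tau) = O(1-\omega)$; conversely, grafting evenly spaced bounded excursions onto $F_*$ produces a template with $\what\dynexp(F) = \tau$ and $\underline\delta(F) \geq c(1-\omega)$. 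Hence $\HD(\Sing_\dims(\omega)) = \Theta(1-\omega) = \Theta(\tstar - \tau)$.

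The packing part is where the fact that $\what\dynexp$ is governed by a $\liminf$, not by the $\limsup$ behind the ordinary exponent of irrationality, is decisive, and it is the main obstacle. A $\liminf$ constraint is compatible with the template exhibiting, along a sparse sequence of scales, configurations of much higher dimension density than $F_*$, provided it returns near $F_*$ infinitely often; the packing dimension is therefore the supremum, over such intermittently rich $\tau$-admissible templates, of the upper average $\overline\delta$ of $\Delta$ --- equivalently, the largest dimension density a template can attain while still respecting the $\liminf$ condition on $\lambda_1$. The content of the theorem is that this optimum is exactly $1$. For the lower bound $\PD \geq 1$ I would write down such a template explicitly --- under the Dani correspondence, a one-parameter family of matrices concentrated around a suitably chosen sequence of rational points, all with $\what\omega = \omega$ --- so that $\overline\delta$ reaches $1$. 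The upper bound $\PD \leq 1$ is the crux: one must show that forcing $\lambda_1$ to decay at a rate close to its maximum leaves the remaining successive minima only a single direction's worth of freedom. This is exactly the step that uses $\qdim = 1$ essentially --- for $\qdim \geq 2$ the analogous optimum, realized in Theorem~\ref{theoremn2}, equals $\dimprod - \pdim$, so the value $1$ is a genuine feature of the degenerate case $\qdim = 1$, in which that formula would return $0$ --- and carrying it out through the combinatorics of templates is where the real work lies.
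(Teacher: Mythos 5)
Your strategy is the same as the paper's: derive the theorem from the variational principle (Theorem~\ref{theoremvariational4}), reduce to an extremal problem over templates, and identify the extremal template $F_*$ at which $\what\dynexp = \tstar = \frac{\pdim-1}{2\pdim}$ and the dimension density vanishes, with the packing/Hausdorff dichotomy explained by the $\liminf$ in $\what\dynexp$ versus the $\limsup$ in $\overline\delta$. Your lower-bound constructions (graft excursions onto $F_*$ for Hausdorff; intermittently rich templates for packing) are also consistent with the paper's templates.

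The gap is that neither upper bound is actually proved, and both require the same nontrivial template analysis. For the Hausdorff upper bound, your ``slack'' heuristic is not an argument: $\underline\delta$ is a $\liminf$ over times $T$, $\what\dynexp$ is a $\liminf$ of $-f_1(T)/T$, and nothing in the heuristic identifies a common subsequence of times at which both quantities are controlled. The paper resolves this concretely: writing $x = \tstar - \tau$ and $j(t)$ for the unique element of $S_-(\ff,t)$ (so $\delta(\ff,t) = j(t)-1$), one fixes $T_1$ with $f_2(T_1)=f_3(T_1)$ and the next $T$ with $f_1(T)=f_2(T)$, shows $\lambda(\{t\le T: j(t)>2\}) = O(xT)$ from the bound on $f_1+f_2$, deduces $f_2<f_3$ and hence (by minimality of $T$ plus the slope rules) $j=2$ on roughly $(T/2,T)$, and concludes $\lambda(\{t\le T/2: j(t)>1\}) = O(xT)$, so $\Delta(\ff,T/2)=O(x)$; one then has to separately dispose of templates where $f_1=f_2$ or $f_2=f_3$ fails to recur. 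For the packing upper bound $\PD\le 1$, you explicitly defer the argument; the paper settles it by contradiction off the same machinery, taking $T_1$ a local maximum of $\Delta$ with $\Delta(\ff,T_1)>1$ (forcing $j>2$ on the final interval of linearity), propagating forward to the next $T$ with $f_1(T)=f_2(T)$, and using the preceding $[0,T/2]$ estimate to conclude $T_1 = T/2 + O(xT)$ and hence $\Delta(\ff,T_1)=O(x)$, a contradiction for small $x$. Both upper bounds thus hinge on the same quantitative analysis of the sequence of merge times $f_1=f_2$, $f_2=f_3$, and this is exactly what is missing from the proposal.
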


\begin{remark}
\label{remarktriviallysingular}
Call a matrix $\bfA$ \emph{trivially singular} if there exists $j = 1,\ldots,d-1$ such that
\[
\log\lambda_{j+1}(g_t u_\bfA \Z^d) - \log\lambda_j(g_t u_\bfA \Z^d) \to \infty \text{ as } t\to\infty.
\]
Then all of the above formulas remain true if $\Sing_\dims(\omega)$ is replaced by the set
\[
\Sing_\dims^*(\omega) = \{\bfA\in \Sing_\dims(\omega) : \bfA\text{ is not trivially singular}\}.
\]
Moreover, for $\qdim \geq 2$ we have\vspace{-0.23in}
\begin{align*}
\phantom{\HD(\Sing_\dims(\infty))} &\hspace{1 in}&
\phantom{\PD(\Sing_\dims(\infty))} &\hspace{1 in}\\
\HD(\Sing_\dims^*(\infty)) &= \dimprod - 2\pdim&
\PD(\Sing_\dims^*(\infty)) &= \dimprod - \pdim
\end{align*}
and for $\qdim = 1$, $\pdim \geq 2$ we have\vspace{-0.23in}
\begin{align*}
\phantom{\HD(\Sing_\dims(\infty))} &\hspace{1 in}&
\phantom{\PD(\Sing_\dims(\infty))} &\hspace{1 in}\\
\HD(\Sing_\dims^*(1)) &= 0&
\PD(\Sing_\dims^*(1)) &= 1.
\end{align*}
Note that the class of trivially singular matrices is smaller than the class of matrices with degenerate trajectories in the sense of \cite[Definition 2.8]{Dani4}, but larger than the class considered in \cite[p.2]{BCC} consisting of matrices $\bfA$ such that the group $\bfA\Z^\qdim + \Z^\pdim$ does not have full rank. A $d\times 1$ or $1\times d$ matrix is trivially singular if and only if it is contained in a rational hyperplane of $\R^d$.
\end{remark}
%


\ignore{

{\renewcommand{\arraystretch}{1.8}
\begin{table}[h!]
\begin{tabular}{|c|c|c|c|}
\hline
\spc{Conditions on $(m,n)$}
&
\spc{Conditions on $\omega$}
&
\spc{$\HD(\Sing_\dims(\omega))$}
&
\spc{$\PD(\Sing_\dims(\omega))$}
\\ \hline
\hline
\spc{$(\pdim,\qdim) \notin \{(1,1), (2,2)\}$}
&
\spc{$\omega > \dirichlet$ \\ sufficiently close to $\dirichlet$}
&
\spc{$\dimsing - \Theta\left(\sqrt{\omega - \dirichlet}\right)$\\
$= \dimsing - \Theta\left(\sqrt{\tau}\,\right)$
}
&
\spc{$\dimsing - \Theta\Big(\omega - \dirichlet\Big)$\\
$= \dimsing - \Theta\left(\tau\right)$
}
\\ \hline
$(\pdim,\qdim) = (2,2)$
&
\spc{$\omega > \dirichlet$ \\ sufficiently close to $\dirichlet$}
&
\spc{$\dimsing - \Theta\Big(\omega - \dirichlet\Big)$ \\
$= \dimsing - \Theta\left(\tau\right)$}
&
\spc{$\dimsing - \Theta\Big(\omega - \dirichlet\Big)$ \\
$= \dimsing - \Theta\left(\tau\right)$\\}
\\ \hline
\spc{$\qdim \geq 2$}
&
\spc{$\omega<\infty$ \\ sufficiently large}
&
\spc{$\dimprod - 2\pdim + \Theta\left(\tfrac1\omega\right)$\\
$= \dimprod - 2\pdim + \Theta\left(\tfrac1\qdim - \tau\right)$}
&
\spc{$\dimprod - \pdim$\\ }
\\ \hline
\spc{$\qdim = 1$ and $\pdim\geq 2$\\}
&
\spc{$\omega<1$ \\ sufficiently close to $1$}
&
\spc{$\Theta\left(1 - \omega\right)$\\
$= \Theta\Big(\tfrac{\pdim-1}{2\pdim} - \tau\Big)$\\}
&
$1$
\\ \hline
\end{tabular}
\caption{Hello world.}
\label{}
\end{table}
}

{\renewcommand{\arraystretch}{1.8}
\begin{table}[h!]
\begin{tabular}{|c|c|c|c|}
\hline
\spc{$(m,n)$}
&
\spc{$\omega$}
&
\spc{$\HD(\Sing_\dims(\omega))$}
&
\spc{$\PD(\Sing_\dims(\omega))$}
\\ \hline
\hline
\spc{$(\pdim,\qdim) \notin \{(1,1), (2,2)\}$}
&
$\omega > \dirichlet$ and sufficiently close to $\dirichlet$
&
\spc{$\dimsing - \Theta\left(\sqrt{\omega - \dirichlet}\right)$\\
}
&
\spc{$\dimsing - \Theta\Big(\omega - \dirichlet\Big)$\\
}
\\ \hline
$(\pdim,\qdim) = (2,2)$
&
$\omega > \dirichlet$ and sufficiently close to $\dirichlet$
&
\spc{$\dimsing - \Theta\Big(\omega - \dirichlet\Big)$ \\
}
&
\spc{$\dimsing - \Theta\Big(\omega - \dirichlet\Big)$ \\
}
\\ \hline
\spc{$\qdim \geq 2$}
&
\spc{$\omega<\infty$ and sufficiently large \\ }
&
\spc{$\dimprod - 2\pdim + \Theta\left(\tfrac1\omega\right)$\\
}
&
\spc{$\dimprod - \pdim$\\ }
\\ \hline
\spc{$\pdim\geq 2$ and $\qdim = 1$\\}
&
\spc{$\omega<1$ sufficiently close to $1$\\}
&
\spc{$\Theta\left(1 - \omega\right)$\\
}
&
$1$
\\ \hline
\end{tabular}
\caption{Hello world.}
\label{}
\end{table}
}

{\renewcommand{\arraystretch}{1.8}
\begin{table}[h!]
\begin{tabular}{|c|c|c|c|}
\hline
\spc{$(m,n)$}
&
\spc{$\omega$}
&
\spc{$\HD(\Sing_\dims(\omega))$}
&
\spc{$\PD(\Sing_\dims(\omega))$}
\\ \hline
\hline
\spc{$(\pdim,\qdim) \notin \{(1,1), (2,2)\}$}
&
$\omega > \dirichlet$ and sufficiently close to $\dirichlet$
&
\spc{
$\dimsing - \Theta\left(\sqrt{\tau}\,\right)$
}
&
\spc{
$\dimsing - \Theta\left(\tau\right)$
}
\\ \hline
$(\pdim,\qdim) = (2,2)$
&
$\omega > \dirichlet$ and sufficiently close to $\dirichlet$
&
\spc{
$\dimsing - \Theta\left(\tau\right)$
}
&
\spc{
$\dimsing - \Theta\left(\tau\right)$\\
}
\\ \hline
\spc{$\qdim \geq 2$}
&
\spc{$\omega<\infty$ and sufficiently large \\ }
&
\spc{
$\dimprod - 2\pdim + \Theta\left(\tfrac1\qdim - \tau\right)$
}
&
\spc{$\dimprod - \pdim$\\ }
\\ \hline
\spc{$\pdim\geq 2$ and $\qdim = 1$\\}
&
\spc{$\omega<1$ sufficiently close to $1$\\}
&
\spc{
$\Theta\Big(\tfrac{\pdim-1}{2\pdim} - \tau\Big)$\\
}
&
$1$
\\ \hline
\end{tabular}
\caption{Hello world.}
\label{}
\end{table}
}

}

\subsection{$1\times 2$ and $2\times 1$ matrices}
Although we cannot give precise formulas for the Hausdorff and packing dimensions of $\Sing_\dims(\omega)$ for all pairs $(\pdim,\qdim)$, the special cases $(\pdim,\qdim) = (1,2)$ and $(\pdim,\qdim) = (2,1)$ are easier to handle.

\begin{theorem}
\label{theoremspecialcase}
For all $\omega \in (1/2,1)$ we have
\begin{align*}
\HD(\Sing_{1,2}(\omega)) &= \begin{cases}
\frac43 - \frac43\sqrt{\tau - 6\tau^3 + 4\tau^4} - 2\tau + \frac83 \tau^2
& \text{ if }\tau \leq \tau_0 \df \frac{3\sqrt2-2}{14}\\
\frac{1-2\tau}{1+\tau}
& \text{ if }\tau \geq \tau_0
\end{cases}\\
\PD(\Sing_{1,2}(\omega)) &= \begin{cases}
\tfrac{4-8\tau}{3} & \text{ if } \tau \leq \tau_1 \df \frac18\\
1 & \text{ if } \tau \geq \tau_1
\end{cases}
\end{align*}
(cf. Figure \ref{figurespecialcasegraph}).
\end{theorem}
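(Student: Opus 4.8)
The plan is to deduce Theorem~\ref{theoremspecialcase} from our variational principle, which converts the computation of the Hausdorff and packing dimensions of sets such as $\Sing_\dims(\omega)$ into optimization problems over a space of \emph{templates}: the admissible asymptotic shapes $\mathbf f=(f_1\le\dots\le f_d)$ of the tuple $t\mapsto\big(\log\lambda_1(g_tu_\bfA\Z^d),\dots,\log\lambda_d(g_tu_\bfA\Z^d)\big)$. By Theorem~\ref{theoremdani2} together with \eqref{dani}, membership of $\bfA$ in $\Sing_{1,2}(\omega)$ translates, on the template side, into the single scalar constraint $\tau(\mathbf f):=\liminf_{t\to\infty}\tfrac{-1}{t}f_1(t)=\tau$, in ambient dimension $d=m+n=3$, so the variational principle (cf.\ Theorem~\ref{theoremvariational5} and its level-set refinement) gives
\[
\HD(\Sing_{1,2}(\omega))=\sup_{\tau(\mathbf f)=\tau}\underline\delta(\mathbf f),\qquad\qquad \PD(\Sing_{1,2}(\omega))=\sup_{\tau(\mathbf f)=\tau}\overline\delta(\mathbf f),
\]
where $\underline\delta,\overline\delta$ are the lower- and upper-average versions of the dimension functional (for the packing side one may as well pass to $\Sing_{1,2}^*(\omega)$, cf.\ Remark~\ref{remarktriviallysingular}). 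The companion $2\times1$ statement then follows by transposition: $\bfA\mapsto{}^t\!\bfA$ is a bi-Lipschitz bijection $\R^{1\times2}\to\R^{2\times1}$, and Jarn\'{\i}k's transference identity $\what\omega({}^t\!\bfA)=1-1/\what\omega(\bfA)$ carries $\Sing_{1,2}(\omega)$ onto $\Sing_{2,1}(1-1/\omega)$ up to a countable union of lines, which is negligible for both dimensions in the relevant ranges.

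The first real step is to cut the optimization down to a finite-dimensional one. Since $d=3$, there are only finitely many combinatorial types of template: with the normalization $\sum_if_i\equiv0$ (a consequence of Minkowski's second theorem), away from corners exactly $m=1$ of the $f_i$ has slope $1/m=1$ and the remaining $n=2$ have slope $-1/n=-\tfrac12$, so on a singular template $f_1\to-\infty$ forces $f_3\to+\infty$ with $f_2$ caught in between. The key input here --- part of the template machinery --- is that the supremum defining $\HD$ is attained, up to $\varepsilon$, by an \emph{eventually self-affine} template (a repeated concatenation of a single building block), while the supremum defining $\PD$ is attained by a template built from dimension-optimal blocks of rapidly increasing length separated by short ``catch-up'' blocks. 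Such a block is specified by finitely many real parameters --- the positions of its corners, the slope assigned to each linear piece, the durations --- ranging over an explicit polytope $\mathcal P$; on $\mathcal P$ both $\tau(\cdot)$ and the relevant dimension functional become explicit piecewise-rational functions, and the problem reduces to maximizing the latter over the slice $\{\tau=\mathrm{const}\}\cap\mathcal P$.

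For the Hausdorff side I would then run Lagrange multipliers on that slice. Two configurations of the extremal block compete: one in which the middle function $f_2$ makes a genuine excursion (optimal for $\tau\le\tau_0$) and one in which a parameter degenerates to $0$ (optimal for $\tau\ge\tau_0$), the crossover $\tau_0=\tfrac{3\sqrt2-2}{14}$ being precisely the value at which the interior critical point of the first configuration reaches the boundary face carrying the second. The first-order conditions for the first configuration reduce to a quadratic whose admissible root supplies the radical $\sqrt{\tau-6\tau^3+4\tau^4}$, hence the branch $\tfrac43-\tfrac43\sqrt{\tau-6\tau^3+4\tau^4}-2\tau+\tfrac83\tau^2$, while the degenerate configuration is linear-fractional and gives $\tfrac{1-2\tau}{1+\tau}$; one checks that the two branches agree in value and first derivative at $\tau_0$ and that the endpoints match Theorems~\ref{theoremsing}, \ref{theoremhsmall} and~\ref{theoremn2} (value $\tfrac43-\Theta(\sqrt\tau)$ as $\tau\downarrow0$, value $\to0$ as $\tau\uparrow\tfrac12$, i.e.\ $\omega\uparrow\infty$). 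The packing side is easier, because $\overline\delta$ asks the running average of the dimension integrand to be large only along \emph{some} sequence of times: using dimension-optimal blocks that grow fast enough to dominate the running average, interspersed with catch-up bursts --- which, because the escape rate is capped at $1/n=\tfrac12$, occupy a fixed proportion of time but contribute negligibly to the average --- one obtains the affine value $\tfrac{4-8\tau}{3}$, valid as long as it is at least $\dimprod-\pdim=1$ (a value always attainable for $\tau<\tfrac12$, cf.\ Theorem~\ref{theoremn2}), i.e.\ for $\tau\le\tau_1=\tfrac18$; beyond that the bursts swamp the average and $\PD=1$, with the matching upper bound again coming from the optimization.

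The hard part will be the reduction step: proving that no exotic aperiodic template beats the self-affine candidates (for $\HD$) or the block-plus-catch-up candidates (for $\PD$), and correctly enumerating all extremal combinatorial types, including the degenerate boundary cases that are responsible for the two branches of the Hausdorff formula. What remains is a constrained optimization --- routine in spirit but demanding care --- in which one has to pin down $\tau_0$ and $\tau_1$ as exactly the parameter values at which the active-constraint set changes and, because of the square root in the Hausdorff formula, keep track of which root of the Lagrange quadratic is admissible along the feasible slice; a final check matches the resulting graphs against Figure~\ref{figurespecialcasegraph}.
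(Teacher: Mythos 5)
Your outline correctly mirrors the paper's strategy: reduce via the variational principle to maximizing $\underline\delta$ and $\overline\delta$ over templates $\ff$ with $\what\tau(\ff)=\tau$ in ambient dimension $d=3$; argue that the Hausdorff supremum is attained up to $\varepsilon$ by an exponentially self-affine template and the packing supremum by long dimension-optimal blocks separated by short catch-ups; then solve a finite-dimensional constrained optimization whose interior critical point yields the radical branch and whose degenerate endpoint yields $\frac{1-2\tau}{1+\tau}$, with $\tau_0$ the crossover and $\tau_1=\tfrac18$ the value at which $\frac{4-8\tau}{3}=1$.

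The genuine gap is the step you bill as ``part of the template machinery.'' There is no general lemma in the paper asserting that the Hausdorff supremum is attained on self-affine templates; that assertion is exactly what has to be established, and for $(m,n)=(1,2)$ the paper does it by hand, not by citation. One first shows, by splicing consecutive like excursions, that the $f_2=f_3$ (high) and $f_1=f_2$ (low) excursions may be taken to strictly alternate; then the rule on changes of slope forces an explicit list of slope vectors on each leg between $t_{2k}$ and $t_{2k+2}$; one normalizes $f_1(t_{2k})=-\tau t_{2k}$; and one proves that a near-optimal window $[t_{2k},t_{2k+2}]$ can be repeated exponentially without lowering the score (respectively, that the packing sup is governed by an analogous pre- and post-computation). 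Even the finite-dimensional optimization is not a one-shot Lagrange argument over a static polytope: the paper proceeds by a chain of monotone local moves (collapse $a_k=b_k$; raise $A_1,A_2$ jointly until $s_{2k+1}=t_{2k+1}$; trade $A_1$ against $A_3$), each justified by a sign check on its effect on $\Delta$, and it is only after these moves that a single scalar $\lambda\in\big[\tfrac{2\tau+1/2}{1/2-\tau},\infty\big)$ remains; the branch $\frac{1-2\tau}{1+\tau}$ arises as the $\lambda\to\infty$ endpoint of that residual problem, the range of $\lambda$ depending on $\tau$. As a roadmap your proposal is sound, but the reductions it defers to ``the hard part'' are the substance of the argument and would have to be supplied in full.
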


\begin{figure}
\begin{tikzpicture}
 \begin{axis}[
   xmin=0,
   ymin=0,
   ]
\addplot[domain=0.5858:1.3333, thick, smooth] ((6*(3*x^3-12*x^2+14*x-4)^(1/2) + 9*x-16)/(2*(12*x-25)),x);
\addplot[domain=0.16:0.5, thick,smooth] {2*(1/2-x)/(1+x)};
\addplot[domain=0.0:0.125, thick,smooth] {(4-8*x)/3};
\addplot[domain=0.125:0.5, thick,smooth] {1};
\addplot[mark=*] coordinates {(0.159,0.586)};
\addplot[mark=*] coordinates {(0.125,1.0)};
\end{axis}

\end{tikzpicture}
\caption{The functions $f_0(\tau) =  \HD(\Sing_{1,2}(\omega))$ and $f_1(\tau) = \PD(\Sing_{1,2}(\omega))$. The function $f_0$ is real-analytic on the intervals $[0,\tau_0]$ and $[\tau_0,1/2]$, where $\tau_0 = \frac{3\sqrt2-2}{14} \sim 0.1602$, while $f_1$ is linear on the intervals $[0,1/8]$ and $[1/8,1/2]$.}
\label{figurespecialcasegraph}
\end{figure}
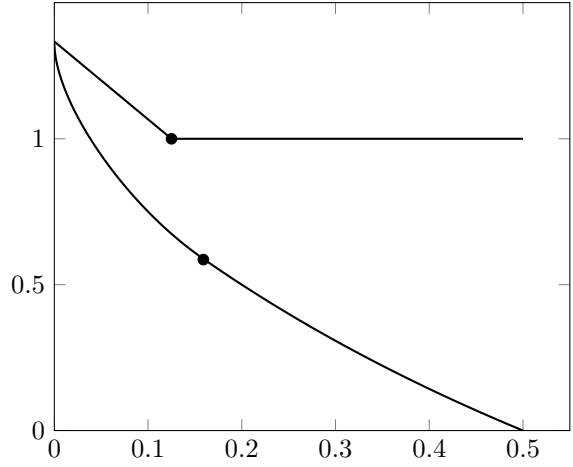

\begin{remark*}
By Jarn\'ik's identity \cite{Jarnik4}, for all $\omega\in [2,\infty]$ we have
\[
\Sing_{1,2}(\omega) = \Sing_{2,1}(\omega')
\]
where $\omega' = 1 - \frac1\omega$. Thus by applying an appropriate substitution to the above formulas, it is possible to get explicit formulas for $\HD(\Sing_{2,1}(\omega'))$ and $\PD(\Sing_{2,1}(\omega'))$, either in terms of $\omega'$ or in terms of $\tau' = \frac{\omega'-\frac12}{\omega'+1} = \frac{\tau}{1+2\tau}$. However, the resulting formulas are not very elegant so we omit them.
\end{remark*}

\begin{remark*}
The transition point $\tau_0 = \frac{3\sqrt 2 - 2}{14}$ in the above formula for Hausdorff dimension corresponds to $\omega_0 = 2+\sqrt2$, $\omega'_0 = \frac{\sqrt2}{2}$, $\tau'_0 = \frac{4-3\sqrt2}{2}$, and $\HD(\Sing_{1,2}(\omega_0)) = 2-\sqrt 2$. The transition point $\tau_1 = \frac18$ for packing dimension corresponds to $\omega_1 = 3$, $\omega'_1 = \frac23$, $\tau'_1 = \frac1{10}$, and $\PD(\Sing_{1,2}(\omega_1)) = 1$.
\end{remark*}

\begin{remark*}
Theorem \ref{theoremspecialcase} implies that $\HD(\Sing_{1,2}(\omega)) < \PD(\Sing_{1,2}(\omega))$ for all $\omega \in (1/2,1)$. This answers the first part of \cite[Problem 7]{BCC} in the affirmative.
\end{remark*}

\begin{remark*}
There has been a lot of partial progress towards the Hausdorff dimension part of Theorem \ref{theoremspecialcase}. In particular, the $\geq$ direction follows from \cite[Corollary 2 and Theorem 3]{BCC}. For $\tau \geq \tau_0$ the upper bound follows from \cite[Corollary 2]{BCC} and for $\tau < \tau_0$, a non-optimal upper bound is given in \cite[Theorem 1]{BCC}. We refer to \cite{BCC} for a detailed history of the prior results.
\end{remark*}

\subsection{Singularity on average}
A different way of quantifying the notion of singularity is the notion of \emph{singularity on average} introduced in \cite{KKLM}. Given a matrix $\bfA$, we define the \emph{proportion of time spent in the cusp} to be the number
\[
\PP(\bfA) = \lim_{\epsilon\to 0} \liminf_{T\to\infty} \frac1T \lambda\big(\big\{t\in [0,T] : \lambda_1(g_t u_\bfA \Z^d) \leq \epsilon\big\}\big) \in [0,1],
\]
where $\lambda$ denotes Lebesgue measure. The matrix $\bfA$ is said to be \emph{singular on average} if $\PP(\bfA) = 1$. Clearly, every singular matrix is singular on average.

\begin{theorem}
For all $p\in [0,1]$, we have
\[
\HD(\{\bfA : \PP(\bfA) = p\}) = 
\PD(\{\bfA : \PP(\bfA) = p\}) = 
p \dimsing + (1-p)\dimprod.
\]
In particular, the dimension of the set of matrices singular on average is $\dimsing$.
\end{theorem}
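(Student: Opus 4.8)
The plan is to deduce the formula from the variational principle, in the same way as the dimension formulas of Theorems~\ref{theoremsing}--\ref{theoremn1}. Since $\PP(\bfA)$ depends only on the asymptotics of the successive minima of $g_t u_\bfA\Z^d$, the condition $\PP(\bfA)=p$ translates into a condition on the successive minima template $\mathbf f_\bfA$ of $\bfA$, so that, by the variational principle, $\HD$ and $\PD$ of $\{\bfA:\PP(\bfA)=p\}$ are computed by maximizing, respectively, the lower and the upper dimension functional $\underline\delta(\mathbf f)$ and $\overline\delta(\mathbf f)$ over the admissible templates $\mathbf f$ consistent with that condition. Writing $f_1(t)=\log\lambda_1(g_t u_\bfA\Z^d)$ for the first coordinate of $\mathbf f_\bfA$ and putting $\varrho_N(T):=\tfrac1T\,\big|\{t\in[0,T]:f_1(t)\le-N\}\big|$, the substitution $\epsilon=e^{-N}$ turns the defining identity into $\lim_{N\to\infty}\liminf_{T\to\infty}\varrho_N(T)=p$. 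As $\varrho_N(T)$ is non-increasing in $N$, so is $\liminf_{T\to\infty}\varrho_N(T)$; hence this identity forces the one-sided bound $\liminf_{T\to\infty}\varrho_N(T)\ge p$ for every fixed $N$, and this is what drives both inequalities.

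For the upper bound I would use the fact --- a time-localized form of the covering estimate of \cite{KKLM} that yields the upper bounds $\HD(\Sing(\pdim,\qdim))\le\dimsing$ and $\PD(\Sing(\pdim,\qdim))\le\dimsing$ of Theorem~\ref{theoremsing} --- that the ``local dimension'' contributing to $\overline\delta(\mathbf f)$ at a time $t$ is at most $\dimprod-\psi\big(-f_1(t)\big)$, where $\psi\ge0$ is non-decreasing, $\psi(0)=0$, and $\psi(r)\to\dimprod-\dimsing$ as $r\to\infty$. In particular this local dimension is at most $\dimprod-\psi(N)$ whenever $f_1(t)\le-N$ and at most $\dimprod$ otherwise, so averaging over $[0,T]$ and taking $\limsup_{T\to\infty}$ gives, for every admissible $\mathbf f$ and every $N$,
\[
\overline\delta(\mathbf f)\;\le\;\dimprod-\psi(N)\liminf_{T\to\infty}\varrho_N(T)\;\le\;\dimprod-\psi(N)\,p ,
\]
the second inequality by the one-sided bound above. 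Letting $N\to\infty$ yields $\overline\delta(\mathbf f)\le p\dimsing+(1-p)\dimprod$, and since $\underline\delta\le\overline\delta$ the same holds for $\underline\delta$. Taking suprema, $\PD(\{\PP(\bfA)=p\})$ --- and a fortiori $\HD(\{\PP(\bfA)=p\})$ --- is at most $p\dimsing+(1-p)\dimprod$.

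For the lower bound I would exhibit, for each $p\in(0,1)$, a single admissible template attaining the value $p\dimsing+(1-p)\dimprod$ for both $\underline\delta$ and $\overline\delta$. Build it from slowly lengthening blocks, the $k$-th block consisting of a stretch of length $(1-p)L_k$ on which the template stays in a fixed bounded region --- contributing the full rate $\dimprod$ and adding no deep time --- followed by an excursion of length $pL_k$ in which only the first successive minimum descends, reaching depth $N_k$ and returning, the descent and ascent being modelled on the extremal singular templates from the proof of Theorem~\ref{theoremsing}, so that this stretch contributes rate close to $\dimsing$. Take $L_k\to\infty$ with $L_{k+1}/L_k\to1$, and $N_k\to\infty$ with $N_k/L_k\to0$ but $L_k$ large enough relative to $N_k$ for the excursion to reach depth $N_k$ (e.g.\ $N_k=\lfloor\sqrt k\rfloor$ and $L_k$ a suitable constant multiple of $k$). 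Because the blocks grow slowly, no single block distorts cumulative averages, so $\varrho_N(T)\to p$ for every fixed $N$, whence $\PP(\bfA)=p$; and because each block already displays the limiting statistics at its own scale, the running average that computes the dimension functionals converges --- rather than oscillating --- to $p\dimsing+(1-p)\dimprod$, so $\underline\delta(\mathbf f)=\overline\delta(\mathbf f)=p\dimsing+(1-p)\dimprod$. The endpoints are treated separately: $\{\PP(\bfA)=0\}$ has full Lebesgue measure (e.g.\ by equidistribution of generic $g_t$-orbits), hence dimension $\dimprod$; and $\{\PP(\bfA)=1\}$ contains $\Sing(\pdim,\qdim)$, so Theorem~\ref{theoremsing} gives $\HD,\PD\ge\dimsing$, matching the upper bound. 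The concluding assertion about matrices singular on average is the case $p=1$.

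The hard part is the lower-bound construction, where two requirements must be met at once. The \emph{iterated} limit in the definition of $\PP$ forces the excursions to reach ever greater depths $N_k$ while keeping $\liminf_{T\to\infty}\varrho_N(T)$ equal to --- not merely bounded below by --- $p$ for every threshold $N$; and simultaneously the running average that computes the dimension must \emph{converge}, not merely oscillate, which is exactly what makes $\HD$ and $\PD$ coincide here (in contrast to the levelsets $\Sing_\dims(\omega)$, where they generally differ). Tuning the block lengths $L_k$ and depths $N_k$ so that each block already exhibits the limiting statistics at its own scale is what reconciles these; verifying that the resulting concatenations are genuine admissible templates, and that the variational principle applies to them, is then routine given the machinery built for Theorems~\ref{theoremsing}--\ref{theoremn1}.
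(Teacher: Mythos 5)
The paper you are working from does not actually prove this theorem --- it is stated as a consequence of the variational principle and, for $p=1$, of \cite{KKLM} and Theorem~\ref{theoremsing} --- so I cannot compare against a reference proof, but your outline via Theorem~\ref{theoremvariational2} is the right framework and your observation that $\PP(\bfA)=p$ forces $\liminf_{T\to\infty}\varrho_N(T)\ge p$ for every $N$ is exactly what one needs. Your lower-bound construction (slowly lengthening blocks, each with a bounded stretch of density $1-p$ and a singular excursion of density $p$ reaching increasing depths $N_k\to\infty$ with $N_k/L_k\to 0$) is also the natural one, although the excursion should be modelled on the full ``swap'' template from \S\ref{subsectionhsmallHDgeq} rather than having ``only the first successive minimum descend'': an excursion with just $f_1$ dipping does not achieve averaged rate close to $\dimsing$.

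The genuine gap is in the upper bound. The pointwise statement that the local contraction rate satisfies $\delta(\ff,t)\le\dimprod-\psi(-f_1(t))$ is false: a counterexample is the rising half of any excursion, where $f_1'\equiv\tfrac1\pdim$ and the bottom coordinate climbs back toward $0$ while the others descend. There one computes $S_+=(0,\pdim]_\Z$, $S_-=(\pdim,d]_\Z$ and $\delta(\ff,t)=\dimprod$ even though $-f_1(t)$ is arbitrarily large, so no nonzero $\psi$ can bound $\delta(\ff,t)$ in terms of $-f_1(t)$ alone. (Already for $\pdim=\qdim=1$: on the $f_1'=1$ leg of a V excursion, $\delta\equiv 1=\dimprod$ while $-f_1>0$ throughout.) What is true --- and what the paper's argument in \S\ref{subsectionhsmallPDleq} actually establishes --- is the \emph{amortized} inequality $\delta(\ff,t)+\phi'(t)\le\dimsing$ on $\{\phi>0\}$, for the potential $\phi=\max_j F_j/c_j\asymp -f_1$. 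To get your estimate, truncate: set $\phi_K=\max(\phi-K,0)$, note $\delta+\phi_K'\le\dimprod-(\dimprod-\dimsing)\chi_{\{\phi>K\}}$ a.e., integrate, and use $\phi_K\ge 0$ to conclude $\overline\delta(\ff)\le\dimprod-(\dimprod-\dimsing)\liminf_{T}\tfrac1T|\{t\le T:\phi(t)>K\}|$. Since $\phi\ge -f_1/c_1$, the set $\{\phi>K\}$ contains $\{-f_1>Kc_1\}$, and your monotonicity observation finishes. In short: you must pass through a potential-function amortization rather than a pointwise estimate on $\delta(\ff,t)$; as written, the averaging step does not go through.
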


Note that the fact that the Hausdorff dimension of the set of matrices singular on average is $\leq\dimsing$ was proven in \cite{KKLM}, while the fact that this number is $\geq\dimsing$ follows from Theorem \ref{theoremsing}.

\subsection{Starkov's conjecture}
In \cite[p.213]{Starkov2}, Starkov asked whether there exists a singular vector (i.e. $\pdim\times 1$ singular matrix) which is not very well approximable. Here, we recall that a matrix $\bfA$ is called \emph{very well approximable} if for some $\omega > \dirichlet$, there exist infinitely many pairs $(\pp,\qq)\in\Z^\pdim\times\Z^\qdim$ such that
\begin{equation}
\label{VWA}
\|\bfA \qq + \pp\| \leq \|\qq\|^{-\omega},
\end{equation}
or equivalently in terms of the Dani correspondence principle, a matrix $\bfA$ is very well approximable if $\limsup_{t\to\infty} -\frac1t\log\lambda_1(g_t u_\bfA \Z^d) > 0$. This question was answered affirmatively by Cheung \cite[Theorem 1.4]{Cheung} in the case $\pdim = 2$. In fact, Cheung showed that if $\psi$ is any function such that $q^{1/2}\psi(q) \to 0$ as $q\to\infty$, then there exists a $2\times 1$ singular vector which is not $\psi$-approximable. Here, a matrix $\bfA$ is called \emph{$\psi$-approximable} if there exist infinitely many pairs $(\pp,\qq)\in\Z^\pdim\times\Z^\qdim$ such that $\qq\neq \0$ and
\[
\|\bfA \qq + \pp\| \leq \psi(\|\qq\|).
\]
The following theorem improves on Cheung's result both by generalizing it to the case of arbitrary $\pdim,\qdim$, and also by computing the dimension of the set of matrices with the given property:
\begin{theorem}
\label{theoremstarkov}
If $\psi$ is any function such that $q^{\qdim/\pdim} \psi(q) \to 0$ as $q\to\infty$, then the set of $\pdim\times\qdim$ singular matrices that are not $\psi$-approximable has Hausdorff dimension $\dimsing$. Equivalently, if $\phi$ is any function such that $\phi(t) \to \infty$ as $t \to \infty$, then the set of $\pdim\times\qdim$ singular matrices $\bfA$ such that $-\log\lambda_1(g_t u_\bfA \Z^d) \leq \phi(t)$ for all $t$ sufficiently large has Hausdorff dimension $\dimsing$. The same is true for the packing dimension.
\end{theorem}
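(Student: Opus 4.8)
The plan is to derive the theorem from the variational principle together with Theorem~\ref{theoremsing}. It is convenient to use the second, equivalent formulation: fix $\phi$ with $\phi(t)\to\infty$ and put
\[
\SS_\phi \df \big\{\bfA\in\Sing(\pdim,\qdim) : -\log\lambda_1(g_t u_\bfA\Z^d) \le \phi(t)\text{ for all large }t\big\},
\]
the passage between this and the $\psi$-formulation being the elementary translation already indicated in the statement. Since $\SS_\phi\subseteq\Sing(\pdim,\qdim)$, Theorem~\ref{theoremsing} gives the upper bounds $\HD(\SS_\phi)\le\PD(\SS_\phi)\le\dimsing$ at once, so the entire content is the two lower bounds $\HD(\SS_\phi),\,\PD(\SS_\phi)\ge\dimsing$.

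For these I would invoke the variational principle, which expresses $\HD$ and $\PD$ of a set cut out by conditions on the successive-minima functions $t\mapsto\big(\log\lambda_j(g_t u_\bfA\Z^d)\big)_{j=1}^d$ as the supremum, over the templates $f=(f_1\le\dots\le f_d)$ compatible with those conditions, of the lower, resp.\ upper, dimension functional $\underline\delta(f)$, resp.\ $\overline\delta(f)$. By the Dani correspondence (Theorem~\ref{theoremdani}) a matrix is singular exactly when its template satisfies $f_1(t)\to-\infty$, and the defining condition of $\SS_\phi$ becomes the extra requirement $-f_1(t)\le\phi(t)$ for all large $t$. Hence it suffices to prove that for every $\epsilon>0$ there is a template $f$ with $f_1\to-\infty$, with $-f_1\le\phi$ eventually, and with $\underline\delta(f)\ge\dimsing-\epsilon$, and likewise with $\overline\delta$. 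Theorem~\ref{theoremsing}, read through the variational principle, already furnishes templates with $f_1\to-\infty$ and $\underline\delta$ (resp.\ $\overline\delta$) within $\epsilon$ of $\dimsing$; the remaining job is to make such a near-optimal template descend slowly.

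The mechanism I would use is to build the slow template by concatenating ``holding'' pieces that detain $f_1$ in a bounded window at its current depth, superimposed with a very slow net downward drift so that $f_1$ still tends to $-\infty$; if the pieces are made longer and longer and pushed far enough out, then by the time $-f_1(t)$ first passes a level $C$ the elapsed time $t$ is already so large that $\phi(t)>C$, which secures $-f_1\le\phi$ eventually while $f_1\to-\infty$ since no descent is ever undone. Provided each holding piece --- however long, and no matter how shallow --- still carries dimension density essentially $\dimsing$, the resulting concatenation will have both $\underline\delta$ and $\overline\delta$ at least $\dimsing-O(\epsilon)$: the holding pieces dominate the time scale (as they must, since a slowly descending $f_1$ forces the descending portions to occupy an asymptotically negligible fraction of time), and the drift is a negligible perturbation.

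The main obstacle is exactly this last requirement: exhibiting holding pieces that stall $f_1$ in a bounded range over an arbitrarily long time while carrying local dimension density arbitrarily close to $\dimsing$ --- a ``frozen'', arbitrarily time-dilated version of the extremal singular behaviour --- and then verifying, through the resulting concatenation, that both the liminf defining $\underline\delta$ and (more delicately) the limsup defining $\overline\delta$ stay within $\epsilon$ of $\dimsing$. This is where one must lean on the flexibility of the construction underlying the $\ge$ direction of Theorem~\ref{theoremsing}, namely that its near-optimal templates are assembled from finite pieces that may be freely rescaled and concatenated; I take this structural input as given from the proof of that theorem.
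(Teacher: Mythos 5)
Your architecture is correct and almost certainly matches the intended route: the upper bound is immediate from Theorem~\ref{theoremsing} since $\SS_\phi\subset\Sing(\pdim,\qdim)$, and the lower bound goes through the variational principle by exhibiting templates with $f_1\to-\infty$ slowly and $\underline\delta,\overline\delta$ near $\dimsing$. But the ``holding piece'' mechanism is described at a level that glosses over the genuine technical content, and one of your supporting claims is misstated.

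First, when $f_1$ turns around at a local maximum, condition (III) of Definition~\ref{definitiontemplate} (convexity of $F_1=f_1$ wherever $f_1<f_2$) forces $f_1=f_2$ at that point. If $f_1$ sits near depth $-C$ while $f_2$ sits near the ``pack'' level $\asymp C/(d-1)$, then the descent-and-return episode of $f_2$ spans $\Theta(C)$ in time, and to dilute it one needs the cycle length --- hence the oscillation window of $f_1$ --- to be large compared with $C$. So ``detain $f_1$ in a bounded window at its current depth'' understates the necessary window size: the window is bounded but not small relative to the depth. More importantly, within a hold $f_1$ spends a fraction $\approx\qdim/(\pdim+\qdim)$ of the time descending; this is not negligible, it is essential, and $\dimsing$ arises precisely as the weighted average of $\dimprod$ (when $1\in S_+$) and $\dimprod-\pdim$ (when $1\in S_-$) over those two phases. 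Your sentence ``a slowly descending $f_1$ forces the descending portions to occupy an asymptotically negligible fraction of time'' can only refer to bridging descents between holds, and should be stated that way.

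Second, the crux --- ``each holding piece ... carries dimension density essentially $\dimsing$'' --- is deferred to ``the proof of Theorem~\ref{theoremsing}.'' But that proof, via Theorem~\ref{theoremhsmall}, constructs exponentially periodic templates with $\what\tau(\ff)=\tau>0$ and density $\dimsing-\Theta(\sqrt\tau)$; these descend linearly, they do not hold. A holding piece is a genuinely different object (one period of that construction with the $\lambda$-rescaling turned off and the up/down times balanced in the ratio $\pdim:\qdim$), and one must re-verify the template axioms and the density calculation for it. So the ``structural input'' you take as given is a modification, not a quotation, of the cited construction.

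Finally, a technicality you do not address: the class $\SS_\phi=\{\gg: g_1\to-\infty,\ -g_1\le\phi\text{ eventually}\}$ is not closed under finite perturbations, so Theorem~\ref{theoremvariational2} does not apply verbatim; one either replaces $\phi$ by a slightly smaller $\phi'$ with $\phi'+C\le\phi$ eventually, or observes that the statement is invariant under $\phi\mapsto\phi\pm C$ and passes to the closure. None of these points is fatal, but they are where the actual work lies, and the proposal as written asserts rather than establishes them.
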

Note that this theorem is optimal in the sense that if $\psi(q) \geq c q^{-\qdim/\pdim}$ for some constant $c$, then it is easy to check that every singular $\pdim\times\qdim$ matrix is $\psi$-approximable.

\subsection{Schmidt's conjecture}
In \cite[p.273]{Schmidt7}, Schmidt conjectured that for all $2\leq k \leq \pdim$, there exists an $\pdim\times 1$ matrix $\bfA$ such that
\begin{align}
\label{ksingular}
\lambda_{k-1}(g_t u_\bfA \Z^d) \to 0 \text{ and }
\lambda_{k+1}(g_t u_\bfA \Z^d) \to \infty \text{ as } t\to\infty.
\end{align}
(Note that any matrix satisfying \eqref{ksingular} is singular by Theorem \ref{theoremdani}.) This conjecture was proven by Moshchevitin \cite{Moshchevitin4}, who constructed an $\pdim\times 1$ matrix $\bfA$ satisfying \eqref{ksingular} and not contained in any rational hyperplane of $\R^\pdim$ (see also \cite{Keita,Schleischitz3}). We will improve Moshchevitin's result by computing the Hausdorff and packing dimensions of the set of matrices witnessing this conjecture:

\begin{theorem}
\label{theoremkmessenger}
For all $(\pdim,\qdim)\neq (1,1)$ and for all $2\leq k \leq \dimsum - 1$, the Hausdorff and packing dimensions of the set of matrices $\bfA$ that satisfy \eqref{ksingular} are both equal to
\[
\max(f_\dims(k),f_\dims(k-1))
\]
where
\begin{equation}
\label{fkdef}
f_\dims(k) = \dimprod - \frac{k\dimprod}{\dimsum}\left(1 - \frac k\dimsum\right) - \left\{\frac{k\pdim}{\dimsum}\right\} \left\{\frac{k\qdim}{\dimsum}\right\}.
\end{equation}
Here $\{x\}$ denotes the fractional part of a real number $x$. The same formulas are valid for the set of matrices $\bfA$ that satisfy \eqref{ksingular} and are not trivially singular.
\end{theorem}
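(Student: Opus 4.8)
The plan is to derive Theorem~\ref{theoremkmessenger} from the variational principle that is the main result of this paper, followed by a finite-dimensional optimization over template directions. By the Dani correspondence (Theorem~\ref{theoremdani}) and the parametric geometry of numbers of Schmidt--Summerer and Roy, the logarithmic successive-minima function $\mathbf a_\bfA(t)\df\big(\log\lambda_1(g_t u_\bfA\Z^d),\ldots,\log\lambda_d(g_t u_\bfA\Z^d)\big)$ always stays within a bounded distance of a \emph{template} $\mathbf f=(f_1,\ldots,f_d)$, and every template is shadowed by some $\mathbf a_\bfA$. Condition~\eqref{ksingular} is insensitive to bounded perturbations and to reordering of the $f_j$, so it amounts to the shadowing template belonging to the class
\[
\mathcal C_k\df\{\mathbf f : f_{k-1}(t)\to-\infty\text{ and }f_{k+1}(t)\to+\infty\text{ as }t\to\infty\}.
\]
The variational principle then says that the Hausdorff dimension of the set of $\bfA$ satisfying \eqref{ksingular} equals $\sup_{\mathbf f\in\mathcal C_k}\underline\delta(\mathbf f)$ and its packing dimension equals $\sup_{\mathbf f\in\mathcal C_k}\overline\delta(\mathbf f)$, where $\underline\delta(\mathbf f)$ and $\overline\delta(\mathbf f)$ denote, respectively, the lower and upper time-averages of the local dimension density of $\mathbf f$; the same holds for the subclass of $\mathcal C_k$ consisting of templates with no index $j$ such that $f_{j+1}-f_j\to\infty$, which corresponds to the non-trivially-singular restriction.

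The lower bound is easy: the set of $\bfA$ satisfying \eqref{ksingular} contains both the set of $\bfA$ with $\lambda_1,\ldots,\lambda_k\to 0$ and $\lambda_{k+1},\ldots,\lambda_d\to\infty$ (we suppress the argument $g_t u_\bfA\Z^d$) and the analogous set with $k$ replaced by $k-1$. Applying the variational principle to these two subclasses, and using that the extremal templates can be taken eventually linear, we get that the dimension in question is at least $\max(D_k,D_{k-1})$, where $D_j$ denotes the maximum of the local density $\delta(\mathbf w)$ over admissible template directions $\mathbf w=(w_1\le\cdots\le w_d)$ with $\sum_i w_i=0$ and $w_1,\ldots,w_j\le 0\le w_{j+1},\ldots,w_d$. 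For the matching upper bound one observes that any $\mathbf f\in\mathcal C_k$ eventually has $f_1\le\cdots\le f_{k-1}$ large and negative and $f_{k+1}\le\cdots\le f_d$ large and positive, so its local densities at large times are dominated by $\delta(\mathbf w)$ for admissible $\mathbf w$ with $w_{k-1}\le 0\le w_{k+1}$, and any such $\mathbf w$ has either $w_k\le 0$ (the ``$j=k$'' case) or $w_k\ge 0$ (the ``$j=k-1$'' case); this gives $\underline\delta(\mathbf f)\le\max(D_k,D_{k-1})$, and the same bound for $\overline\delta(\mathbf f)$ follows because once $f_{k-1}$ is very negative the high-density configurations are confined to an initial segment of vanishing relative length. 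In particular $\HD=\PD$, and everything reduces to the identity $D_j=f_\dims(j)$ for $j=k-1,k$.

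Establishing $D_j=f_\dims(j)$ is the technical heart. One first writes the local density $\delta(\mathbf w)$ of a linear template explicitly: it is assembled from the equality pattern of the coordinates of $\mathbf w$ together with the allocation of the $\pdim$ expanding and $\qdim$ contracting eigendirections of $g_t$, and it is concave on the admissible cone. One then maximizes this concave function subject to $\sum_i w_i=0$, admissibility, and the sign constraint with threshold $j$. An exchange/convexity argument identifies the optimizer as the \emph{balanced} direction, in which the bottom $j$ coordinates and the top $d-j$ coordinates are each made as equal as possible, the common slope levels and their multiplicities being forced by $\pdim$, $\qdim$, and $j$. Since $j\pdim/\dimsum$ and $j\qdim/\dimsum$ are in general not integers, the balanced direction must split one of the two blocks unevenly, and tracking this is precisely what produces the cross term $\{j\pdim/\dimsum\}\{j\qdim/\dimsum\}$ in \eqref{fkdef}; evaluating $\delta$ at the balanced direction and simplifying yields $f_\dims(j)$.

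For the refinement to matrices that are not trivially singular, the upper bound is automatic since the set shrinks, and for the lower bound one perturbs the extremal linear template into a nonlinear one that periodically lets $f_k$ drift up toward the top block and back down again, on a set of times of vanishing density, so that no fixed gap $f_{j+1}-f_j$ tends to infinity; this changes neither $\underline\delta$ nor $\overline\delta$ and keeps the template in the smaller class, so the value $\max(f_\dims(k),f_\dims(k-1))$ is still attained. The main obstacle in all of this is the concave optimization of the preceding paragraph: obtaining a usable closed form for $\delta(\mathbf w)$ on the admissible cone, and then proving that the balanced direction is optimal, with the integer-versus-fractional bookkeeping that generates the fractional-part term handled correctly.
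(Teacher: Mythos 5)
Your overall framing---invoking the variational principle, splitting into the two cases where the bottom block has size $k$ or $k-1$, and noting that Hausdorff and packing dimensions coincide because the extremal templates can be made (near-)periodic---matches the paper. But the core computation, the claimed identity ``$D_j = f_\dims(j)$,'' fails under the mechanism you describe. You propose to maximize the density $\delta$ over a single (eventually) linear slope vector $\mathbf w$---a ``balanced direction,'' possibly with an internal split of the bottom block---and you attribute the fractional-part term to integer bookkeeping in that split. Neither interpretation produces \eqref{fkdef}. In the paper the extremizer is \emph{not} eventually linear: the bottom $j$ coordinates remain together in one interval of equality, and the integer allocation $\diff_-(t)\in\Z$ of contracting directions carried by that block \emph{oscillates over time}. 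Since $\dimprod - \delta(\ff,t) = \diff_-(t)\big(\pdim - \diff_+(t)\big)$ with $\diff_+(t) = j - \diff_-(t)$, a quadratic in $\diff_-$, and since the divergence $F_j = \sum_{i\le j}f_i \to -\infty$ forces $\fint\diff_-\ge j\qdim/\dimsum$ with equality in the limit as the drift rate tends to zero, the optimization collapses to \emph{minimizing the variance of an integer-valued function of time with prescribed mean} $x = j\qdim/\dimsum$. The minimum is $\{x\}(1-\{x\}) = \{j\pdim/\dimsum\}\{j\qdim/\dimsum\}$, attained by a two-point law on $\{\lfloor x\rfloor,\lceil x\rceil\}$---that is, by spending a $\{x\}$-fraction of time at allocation $\lceil x\rceil$ and $1-\{x\}$ at $\lfloor x\rfloor$. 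The fractional-part correction in \eqref{fkdef} is a time-variance term, not a feature of any single direction.

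Concretely, no single linear template gets the right answer. With an unsplit bottom block the best integer allocation is $\dir_+ = \lfloor j\pdim/\dimsum\rfloor$, giving codimension $(\pdim - \lfloor u\rfloor)\lceil v\rceil$ (where $u = j\pdim/\dimsum$, $v = j\qdim/\dimsum$), which strictly exceeds $(\pdim-u)v + \{u\}\{v\}$ whenever $\{u\}\ne 0$; for instance $(\pdim,\qdim,j)=(3,2,2)$ gives $2$ versus the correct $8/5$. Splitting the bottom block into two linear subblocks with distinct slopes is strictly worse still, because the codimension $\dimprod - \delta$ (an inversion count between $S_+$ and $S_-$) picks up an additional positive integer cross term counting inversions between the two subblocks' allocations, and moreover the split introduces a second trivially singular gap inside the bottom block that your proposed perturbation (letting $f_k$ drift toward the top) does not repair. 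Passing to the concave envelope (allowing $\dir_\pm$ real) errs in the other direction and drops $\{u\}\{v\}$ entirely. So the ``exchange/convexity argument identifies the optimizer as the balanced direction'' step cannot be made to close as stated; the variance-minimization-in-time argument is where the actual content of the proof lies.
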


\begin{remark}
The function $f_\dims$ satisfies $f_\dims(\dimsum-k)=f_\dims(k)$ and $f_\dims(1) = f_\dims(\dimsum-1) = \dimsing$. Moreover, for all $1\leq k \leq \dimsum-1$ we have $f_\dims(k) \leq \dimsing$. It follows that when $k = 2$ or $\dimsum - 1$, the Hausdorff and packing dimensions of the set of matrices $\bfA$ that satisfy \eqref{ksingular} are both equal to $\dimsing$.
\end{remark}

\begin{remark}
When $\pdim = 1$ or $\qdim = 1$, the fractional parts appearing in \eqref{fkdef} can be computed explicitly, leading to the formula
\[
f_\dims(k) = \dimprod - \frac{k(\dimsum-k)}{\dimsum}\cdot
\]
However, this formula is not valid when $\pdim,\qdim \geq 2$.
\end{remark}~\\

\comdavid{Some other things probably worth adding at some point are:
\begin{itemize}
\item Ahlfors dimension of singular vectors,
\item an upper bound on the dimension of singular vectors in $C\times C$
\item Hausdorff $<$ packing dimension except in special cases
\end{itemize}
However I don't think these should appear in the announcement.
}

{\bf Acknowledgements.} This research began when the authors met at the American Institute of Mathematics via their SQuaRE program. We thank them for their hospitality. The first-named author was supported in part by a 2017-2018 Faculty Research Grant from the University of Wisconsin--La Crosse. The second-named author was supported in part by the Simons Foundation grant \#245708. The third-named author was supported in part by the EPSRC Programme Grant EP/J018260/1. The fourth-named author was supported in part by the NSF grant DMS-1361677. We thank Nicolas Chevallier, Elon Lindenstrauss, Damien Roy, and Johannes Schleischitz for helpful comments.

\section{The variational principle}
\label{sectionvariational}

All the theorems in the previous section (with the exception of Theorems \ref{theoremdani} and \ref{theoremdani2}) are consequences of a single \emph{variational principle} in the parametric geometry of numbers. This variational principle is a quantitative analogue of theorems due to Schmidt and Summerer \cite[\62]{SchmidtSummerer3} and Roy \cite[Theorem 1.3]{Roy3}. However, we will state their results in language somewhat different from the language used in their papers, due to the fact that the fundamental object we consider is the one-parameter family of unimodular lattices $(g_t u_\bfA \Z^d)_{t\geq 0}$ used by the Dani correspondence principle, rather than a one-parameter family of (non-unimodular) convex bodies as is done in \cite{SchmidtSummerer3,Roy3}. We leave it to the reader to verify that the theorems we attribute below to \cite{SchmidtSummerer3} and \cite{Roy3} are indeed faithful translations of their results to our setting. We note that these papers, unlike ours, do not consider the case of matrices.

The fundamental question of our version of the parametric geometry of numbers will be as follows: given a matrix $\bfA$, what does the function $\Mink = \Mink_\bfA = (\mink_1,\ldots,\mink_d) : \Rplus \to \R^d$ defined by the formula
\begin{equation}
\label{hitdef}
\mink_i(t) \df \log\lambda_i(g_t u_\bfA \Z^d)
\end{equation}
look like? The function $\Mink_\bfA$ will be called the \emph{successive minima function} of the matrix $\bfA$. The Dani correspondence principle shows that many interesting Diophantine questions about the matrix $\bfA$ are equivalent to questions about its successive minima function.

The main restriction on the successive minima function comes from an application of Minkowski's second theorem on successive minima to certain subgroups of $g_t u_\bfA \Z^d$. Specifically, fix $j = 1,\ldots,d-1$ and let $I$ be an interval such that $\mink_j(t) < \mink_{j+1}(t)$ for all $t\in I$. For each $t\in I$, let $V_t \subset \R^d$ be the linear span of the set
\[
\{\rr\in \Z^d : \|g_t u_\bfA \rr\| \leq \lambda_j(g_t u_\bfA \Z^d)\}.
\]
Then a continuity argument shows that the map $t\mapsto V_t$ is constant on $I$, see \cite[Lemma 2.1]{SchmidtSummerer1} for the case of simultaneous approximation.
Write $V_t = V$. By Minkowski's second theorem, we have
\[
\sum_{i\leq j} \mink_i(t) \asymp_\plus F_{j,I}(t) \df \log\|g_t u_\bfA (V\cap \Z^d)\|,
\]
where $\|\Gamma\|$ denotes the covolume of a discrete group $\Gamma \subset \R^d$, and $A\asymp_\plus B$ means that there exists a constant $C$ such that $|B-A|\leq C$. Now an argument based on the exterior product formula for covolume and the definition of $g_t$ shows that $F_{j,I} \asymp_\plus G_{j,I}$ for some convex, piecewise linear function $G_{j,I}$ whose slopes are in the set
\begin{equation}
\label{slopeset1}
Z(j) \df \left\{\tfrac{\dir_+}{\pdim} - \tfrac{\dir_-}{\qdim}: \dir_\pm \in [0,d_\pm]_\Z,\;\;\dir_+ + \dir_- = j\right\},
\end{equation}
where for convenience we write $d_+ = \pdim$, $d_- = \qdim$, and $[a,b]_\Z = [a,b]\cap\Z$. This suggests that $\Mink$ can be approximated by a piecewise linear function $\ff$ such that whenever $f_j < f_{j+1}$ on an interval $I$, the function $F_j = \sum_{i\leq j} f_i$ is convex on $I$ with slopes in $Z(j)$. Moreover, it is obvious that $\mink_1\leq \cdots \leq \mink_d$, and the formula for $g_t$ implies that for all $i$, we have $-\frac1\qdim \leq \mink_i' \leq \frac1\pdim$ wherever $\mink_i$ is differentiable. We therefore make the following definition:

\begin{definition}
\label{definitiontemplate}
An $\pdim\times\qdim$ \emph{template} is a piecewise linear map $\ff:\CO{t_0}\infty\to\R^d$ with the following properties:
\begin{itemize}
\item[(I)] $f_1 \leq \cdots \leq f_d$.
\item[(II)] $-\frac1\qdim \leq f_i' \leq \frac1\pdim$ for all $i$.
\item[(III)] For all $j = 1,\ldots,d$ and for every interval $I$ such that $f_j < f_{j+1}$ on $I$, the function $F_j := \sum_{i\leq j} f_i$ is convex on $I$ with slopes in $Z(j)$. Here we use the convention that $f_0 = -\infty$ and $f_{d+1} = +\infty$.
\end{itemize}
When $\pdim = 1$, templates are a slight generalization of reparameterized versions of the \emph{rigid systems} of \cite{Roy3}. We denote the space of $\pdim\times\qdim$ templates by $\TT_{\dims}$.
\end{definition}

The fundamental relation between templates and successive minima functions is given as follows:

\begin{theorem}
\label{theoremSSR}
~
\begin{itemize}
\item[(i)] For every $\pdim\times \qdim$ matrix $\bfA$, there exists an $\pdim\times \qdim$ template $\ff$ such that $\Mink_\bfA \asymp_\plus \ff$.
\item[(ii)] For every $\pdim\times \qdim$ template $\ff$, there exists an $\pdim\times \qdim$ matrix $\bfA$ such that $\Mink_\bfA \asymp_\plus \ff$.
\end{itemize}
\end{theorem}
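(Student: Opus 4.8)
\emph{The plan} is to treat the two directions separately. Part~(i) is a quantitative form of the constraints already indicated before Definition~\ref{definitiontemplate}, while part~(ii) is an inductive construction of a matrix realizing a prescribed template; for $\pdim=1$ or $\qdim=1$ both directions are variants of results of Schmidt--Summerer \cite{SchmidtSummerer3} and Roy \cite{Roy3}, and the general case follows the same strategy. For part~(i), fix $\bfA$ and write $\Mink=\Mink_\bfA$ and $F_j=\mink_1+\cdots+\mink_j$. The local input is this: on any interval $I$ on which $\mink_{j+1}-\mink_j$ stays above a suitable absolute constant $C_0$, a Mahler-compactness argument (as in \cite[Lemma 2.1]{SchmidtSummerer1}) shows that the rational subspace $V_t\subset\R^d$ spanned by the integer vectors realizing the first $j$ minima is constant on $I$, say $V_t=V$; since $g_t u_\bfA\Z^d\cap g_t u_\bfA V = g_t u_\bfA(V\cap\Z^d)$, applying Minkowski's second theorem inside the $j$-dimensional space $g_t u_\bfA V$ gives
\[
F_j(t)\ \asymp_\plus\ \log\big\|\,({\textstyle\bigwedge}^{j} g_t)\,\mathbf w_V\,\big\|,\qquad \mathbf w_V\df({\textstyle\bigwedge}^{j} u_\bfA)\,\mathbf e_V,
\]
where $\mathbf e_V\in{\textstyle\bigwedge}^{j}\R^d$ is the exterior product of a $\Z$-basis of $V\cap\Z^d$, so $\mathbf w_V$ is a fixed nonzero vector. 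Since ${\textstyle\bigwedge}^{j} g_t$ is diagonal in the standard basis of ${\textstyle\bigwedge}^{j}\R^d$ with logarithmic eigenvalues running over exactly the elements of $Z(j)$, the right-hand side equals (for the Euclidean norm) $\tfrac12\log\sum_\alpha c_\alpha e^{2\alpha t}$ with $c_\alpha\geq 0$ and $\alpha\in Z(j)$, which is convex and within an absolute constant of a convex piecewise-linear function with slopes in $Z(j)$. Thus on every interval where the $j$-th gap is large, $F_j$ is within $O(1)$ of such a function.

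The remaining, and I expect principal, difficulty in part~(i) is to glue these local pieces into a single genuine template within bounded distance of $\Mink$. On the complementary intervals one has only the elementary bounds $|F_{j+1}-2F_j+F_{j-1}|=|\mink_{j+1}-\mink_j|\leq C_0$, together with $F_d\asymp_\plus 0$ from unimodularity and $-\tfrac1\qdim\leq\mink_i'\leq\tfrac1\pdim$; from this data one must produce convex piecewise-linear $G_1,\dots,G_d$ with $G_j$ having slopes in $Z(j)$, $F_j\asymp_\plus G_j$ for all $j$, and $f_j\df G_j-G_{j-1}$ satisfying (I) and (II), so that $\ff=(f_1,\dots,f_d)$ is a template with $\ff\asymp_\plus\Mink$. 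Reconciling the convex pieces across the (locally finitely many) transition times without spoiling the additive estimate — and doing so compatibly across the different indices $j$ — is the crux; when $\pdim=1$ this is a reparametrized form of Roy's theorem on rigid systems \cite{Roy3}, and similarly for $\qdim=1$, while the general case runs along the same lines with the combinatorics of the slope sets $Z(j)$ replacing the integer-slope combinatorics of the classical setting.

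For part~(ii), first replace $\ff$, at the cost of bounded distance, by a template whose breakpoints and slopes are rational, so that its combinatorial type is finite on each compact interval. Then build $\bfA$ as the limit of a sequence $\bfA_N$ chosen so that $\Mink_{\bfA_N}\asymp_\plus\ff$ on $\CO{t_0}{T_N}$ for an increasing sequence $T_N\to\infty$. Since $\Mink_\bfB$ restricted to a bounded time interval depends continuously on $\bfB$ with a modulus depending only on the length of the interval, keeping $|\bfA_{N+1}-\bfA_N|$ sufficiently small forces the limit $\bfA$ to satisfy $\Mink_\bfA\asymp_\plus\ff$ on all of $\CO{t_0}\infty$. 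The passage from $\bfA_N$ to $\bfA_{N+1}$ is a corner-turning step: along the next segment $\ff$ prescribes which minima should grow and which should shrink, each shrinking minimum must be witnessed by a new primitive integer vector (or flag of rational subspaces), and one produces these by a Dirichlet/Minkowski pigeonhole argument at the scale dictated by $T_{N+1}$ — perturbing $\bfA_N$ slightly so that prescribed integer points become exactly as short under $g_{T_{N+1}}u_{\bfA_{N+1}}$ as $\ff$ demands — while verifying by Minkowski's second theorem that the remaining minima evolve as required, which is possible precisely because property~(III) encodes the Minkowski constraints exactly. Managing this bookkeeping for all indices $j$ simultaneously, rather than only $j=1$ as in the classical best-approximation picture, is the technical heart of this direction.
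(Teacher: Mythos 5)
Your local estimates for part (i) reproduce what the paper records before Definition~\ref{definitiontemplate}: constancy of $V_t$ on intervals where $\mink_{j+1}-\mink_j$ is large, Minkowski's second theorem, and the exterior-product computation placing $F_j$ within $O(1)$ of a convex piecewise-linear function with slopes in $Z(j)$. You correctly identify that the gluing — turning the family $\{F_{j,I}\}$ (indexed both by $j$ and by the interval $I$ on which the $j$th gap is large) into a single genuine template within bounded distance of $\Mink_\bfA$ — is the crux, but then you stop exactly there, gesturing at Roy/Schmidt--Summerer as a template for the general case. This gluing is not a peripheral technicality; it is the content of part~(i), and there is a concrete device missing. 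The paper's resolution (Lemma~\ref{lemmafindtemplate}, applied to $\Mink_\bfA$ via the observation that successive minima functions satisfy the hypotheses of that lemma) is to add a large correction $q(d-q)C$ to each $F_q$, form the point set $\Gamma(t)=\{(q,\,F_q(t)+q(d-q)C): F_q(t)\ne\ast\}$, and define $g_q(t)$ as the increment of the Newton polygon of $\Gamma(t)$. The quadratic correction is tuned so that whenever some $F_q$ becomes undefined the corresponding point lies strictly above the Newton polygon, making $\gg$ continuous across the transition times you worry about; separate arithmetic normalizations (values constrained to residue classes modulo $d^{4d}$) then force the resulting template to be simple and $\tbeta$-integral. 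Without some global reconciliation device of this type, what you have is a list of necessary constraints on templates, not a construction of one.

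For part (ii) your proposal — build $\bfA$ as a limit of $\bfA_N$ with $\Mink_{\bfA_N}\asymp_\plus\ff$ on $[t_0,T_N]$, producing the new short vectors at each ``corner'' by a Dirichlet/Minkowski pigeonhole and invoking continuity — is a genuinely different route from the paper, which obtains part (ii) as a by-product of the lower bound in the variational principle: using the $\delta$-dimensional Hausdorff/packing game of \S\ref{sectiongame}, Alice has a strategy keeping the successive minima function within $O(1)$ of $\ff$ forever, so $\MM(\ff)\neq\emptyset$ (and in fact $\HD(\MM(\ff))\geq\underline\delta(\ff)$). Your approach is closer to Roy's original argument for $\pdim=1$. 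The gap is error accumulation, which you flag without closing: the Lipschitz modulus of $\bfB\mapsto\Mink_\bfB\given[t_0,T]$ blows up as $T\to\infty$, while the perturbation required to bring the prescribed integer vectors into play at scale $T_{N+1}$ is not free to be made arbitrarily small, so the implied constant in $\Mink_{\bfA_N}\asymp_\plus\ff$ threatens to degrade with $N$. Some quantitative error-correction mechanism is mandatory — in the paper this is exactly the content of \S\ref{subsectionerrorcorrection}, where the target template is re-perturbed at each stage by a vector $\Pert^{(\ell)}$ chosen to absorb the discrepancy accumulated so far, together with the uniform bound \eqref{ETSsummary} showing the perturbation vectors stay bounded. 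Without a lemma of this type your continuity argument does not close, and that lemma is where most of the work of part (ii) lives.
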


In the case $\pdim = 1$, part (i) of Theorem \ref{theoremSSR} is due to Schmidt and Summerer \cite[\62]{SchmidtSummerer3} and part (ii) is due to Roy (\cite[Theorem 1.3]{Roy3} and \cite[Corollary 4.7]{Roy2}).

Theorem \ref{theoremSSR}(ii) asserts that for every template $\ff$, the set
\[
\MM(\ff) \df \{\bfA : \Mink_\bfA \asymp_\plus \ff\}
\]
is nonempty. It is natural to ask how big this set is in terms of Hausdorff and packing dimension. Moreover, given a collection of templates $\FF$, we can ask the same question about the set
\[
\MM(\FF) = \bigcup_{\ff\in\FF} \MM(\ff).
\]
It turns out to be easier to answer the second question than the first, assuming that the collection of templates $\FF$ is closed under finite perturbations. Here, $\FF$ is said to be \emph{closed under finite perturbations} if whenever $\gg \asymp_\plus \ff\in\FF$, we have $\gg\in \FF$.

\begin{theorem}[Variational principle, version 1]
\label{theoremvariational1}
Let $\FF$ be a collection of templates closed under finite perturbations. Then
\begin{align}
\label{variational1}
\HD(\MM(\FF)) &= \sup_{\ff\in\FF} \underline\delta(\ff),&
\PD(\MM(\FF)) &= \sup_{\ff\in\FF} \overline\delta(\ff)
\end{align}
where the functions $\underline\delta,\overline\delta:\TT_\dims\to [0,\dimprod]$ are as in Definition \ref{definitiondimtemplate} below.
\end{theorem}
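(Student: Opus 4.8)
The plan is to prove the two equalities by establishing four inequalities, two for each dimension, using Theorem \ref{theoremSSR} as the bridge between matrices and templates. For the lower bounds, the key point is that for a \emph{single} template $\ff$, the set $\MM(\ff)$ is nonempty by Theorem \ref{theoremSSR}(ii), and more can be said: one should construct, for each $\ff\in\FF$ and each $\varepsilon>0$, a Cantor-like subset of $\MM(\ff)$ (or of $\MM(\FF)$ near $\ff$) of Hausdorff dimension at least $\underline\delta(\ff)-\varepsilon$ and, separately, a set of packing dimension at least $\overline\delta(\ff)-\varepsilon$. Concretely, I expect $\underline\delta$ and $\overline\delta$ (per Definition \ref{definitiondimtemplate}) to be defined as a lower/upper average over $t$ of a local ``dimensional weight'' attached to the slope data of $\ff$ — essentially counting, at each scale, how many admissible continuations of the template there are while staying $\asymp_+$-close to $\ff$. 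The lower-bound construction then glues together, along the trajectory parameter $t$, the matrix-building mechanism of Roy/Schmidt–Summerer at a controlled sequence of scales, with enough branching at each scale to realize the claimed dimension; a mass distribution principle / Frostman argument on the resulting Cantor set yields the Hausdorff lower bound, and the box-counting structure of the construction at a sparse subsequence of scales (where the local weight is near its limsup) yields the packing lower bound. Taking the supremum over $\ff\in\FF$ gives $\HD(\MM(\FF))\ge\sup_\ff\underline\delta(\ff)$ and $\PD(\MM(\FF))\ge\sup_\ff\overline\delta(\ff)$; closure of $\FF$ under finite perturbations is what lets us absorb the $\asymp_+$ errors from Theorem \ref{theoremSSR}(ii) without leaving $\FF$.

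For the upper bounds, I would use Theorem \ref{theoremSSR}(i): every $\bfA\in\MM(\FF)$ has $\Mink_\bfA\asymp_+\ff$ for some $\ff$, and since $\FF$ is closed under finite perturbations we may take $\ff\in\FF$, so $\MM(\FF)=\bigcup_{\ff\in\FF}\MM(\ff)$ exactly. The strategy is to cover $\MM(\ff)$ efficiently: the constraint $\Mink_\bfA\asymp_+\ff$ forces the orbit $(g_tu_\bfA\Z^d)_{t\ge0}$ to track a prescribed path in the space of lattices, and at each scale this confines $\bfA$ to a union of boxes whose number and size are governed by exactly the same local weight that defines $\underline\delta(\ff)$ (for Hausdorff, using the liminf-average so that infinitely many scales give good covers) and $\overline\delta(\ff)$ (for packing, using the limsup-average). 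This is the standard ``shrinking target / Dani correspondence'' covering estimate: being close to the template at time $t$ means certain Diophantine inequalities hold with the budget dictated by $\ff$, and one counts integer solutions. One must then pass from the estimate for each individual $\ff$ to the union over $\ff\in\FF$: for Hausdorff dimension this is automatic by countable stability after reducing to a countable subfamily (templates are determined up to $\asymp_+$ by countably much combinatorial data — breakpoints at rational-ish times, slopes in the finite set $Z(j)$ — so $\FF$ modulo finite perturbation is effectively countable), giving $\HD(\MM(\FF))=\sup_\ff\underline\delta(\ff)$; for packing dimension one uses that packing dimension is also countably stable, giving $\PD(\MM(\FF))\le\sup_\ff\overline\delta(\ff)$.

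The main obstacle I anticipate is the lower bound, specifically making the branching construction produce \emph{matrices} (not just templates) with the full claimed dimension while maintaining the $\asymp_+$ bound uniformly along the whole trajectory. Roy's and Schmidt–Summerer's constructions realize a single template by a single matrix; here one needs a parametrized family — a tree of partial constructions at dyadic-type scales — with quantitative control showing that distinct branches give matrices separated by a definite amount at the relevant scale, so that the natural measure on the tree satisfies a Frostman condition with exponent $\underline\delta(\ff)-\varepsilon$. Matching the combinatorial count of branches to the analytic definition of $\underline\delta$ and $\overline\delta$ in Definition \ref{definitiondimtemplate} — and checking that the inequalities obtained from the upper-bound covering argument and the lower-bound construction meet — is where the real work lies; the reduction to a countable family, the countable stability of both dimensions, and the bookkeeping of $\asymp_+$ errors via closure under finite perturbations are comparatively routine. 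A secondary subtlety is that the packing dimension lower bound requires the construction to be ``fat'' along a sparse sequence of scales simultaneously for \emph{one} template achieving the supremum, which may require first passing to a template $\ff$ with $\overline\delta(\ff)$ within $\varepsilon$ of the supremum and then arranging the sparse good scales by hand.
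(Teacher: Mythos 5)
Your proposed route is genuinely different from the paper's, and it has a real gap at its core. The paper does \emph{not} prove Theorem \ref{theoremvariational1} by a direct covering argument plus a Cantor-set construction; it proves it through the game-theoretic characterization of dimension, Theorem \ref{theoremHPgame}. The lower bound is obtained by giving Alice a strategy (the four-stage argument: reduction to simple $\gamma_*$-integral templates, the mini-strategy lemma, the perturbation/error-correction machinery, and the uniform error bound), and the upper bound is obtained by giving Bob a winning strategy together with the Borel determinacy theorem and the Rogers--Taylor density theorem. The game formalism is not a cosmetic convenience: it lets Alice and Bob respond adaptively scale by scale, so that no a~priori reduction of $\FF$ to a tractable subfamily is needed.

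The specific step in your proposal that fails is the claimed reduction to a countable subfamily. You assert that ``$\FF$ modulo finite perturbation is effectively countable'' because templates are determined by breakpoints and slopes in finite sets; this is false. The set $\TT_\dims / \asymp_\plus$ is uncountable: for instance, templates with $f_1(t) \sim -\tau t$ for distinct real $\tau$ are pairwise inequivalent, and more generally the limiting behavior of a template carries continuum-many degrees of freedom that $\asymp_\plus$ does not collapse. Because $\MM(\FF) = \bigcup_{\ff\in\FF}\MM(\ff)$ is then an uncountable union, you cannot invoke countable stability of Hausdorff or packing dimension to pass from individual estimates to the union. Worse, even if one could pass to a countable subfamily, the individual estimate $\HD(\MM(\ff)) \leq \underline\delta(\ff)$ that your covering argument aims for is simply not true: as the paper remarks immediately after Theorem \ref{theoremvariational1}, $\underline\delta$ is sensitive to finite perturbations while $\MM(\ff) = \MM(\gg)$ whenever $\gg\asymp_\plus\ff$, so $\HD(\MM(\ff))$ is only equal to $\sup_{\gg\asymp_\plus\ff}\underline\delta(\gg)$, which can exceed $\underline\delta(\ff)$. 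This is exactly why the theorem requires $\FF$ to be closed under finite perturbations, and why the game (whose score is averaged over infinitely many turns and is therefore automatically invariant under finite changes) is the right tool: Bob's adaptive strategy produces a measure whose local dimension at \emph{every} point of $\MM(\FF)$ is controlled, regardless of which $\ff\in\FF$ that point tracks, with no countability assumption on $\FF$.

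Your lower-bound sketch is closer in spirit to what the paper does — Alice's winning strategy in the Hausdorff/packing game is, in the end, a tree construction to which the Rogers--Taylor mass-distribution principle is applied — but the hard content (realizing the branching count $\delta(\ff,I)$ by counting good matrices $\bfB$ at each move, and controlling drift via the error-correction/perturbation vectors so the constructed lattice stays a $C$-match for $\ff$ for all time) is precisely the mini-strategy and error-correction lemmas, which your sketch acknowledges but does not supply. So in summary: the approaches diverge fundamentally at the upper bound, where your covering-plus-countable-reduction plan cannot be repaired without essentially re-deriving the adaptive game estimate, and they converge in spirit but not in detail at the lower bound.
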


\begin{corollary}
\label{corollaryvariational}
With $\FF$ as above, we have
\begin{align}
\label{variational}
\HD(\MM(\FF)) &= \sup_{\ff\in \FF} \HD(\MM(\ff)),&
\PD(\MM(\FF)) &= \sup_{\ff\in \FF} \PD(\MM(\ff)).
\end{align}
\end{corollary}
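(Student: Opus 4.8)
The plan is to derive the corollary purely formally from Theorem~\ref{theoremvariational1}, by applying that theorem twice: once to the collection $\FF$ itself, and once to the finite-perturbation closure of an individual template.

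First I would record the elementary observation that the set $\MM(\gg)$ depends only on the $\asymp_\plus$-equivalence class of the template $\gg$: if $\gg\asymp_\plus\ff$, then since $\asymp_\plus$ is symmetric and transitive, $\Mink_\bfA\asymp_\plus\gg$ holds if and only if $\Mink_\bfA\asymp_\plus\ff$, so $\MM(\gg)=\MM(\ff)$. Consequently, for any single template $\ff$ the collection $[\ff]\df\{\gg\in\TT_\dims:\gg\asymp_\plus\ff\}$ is (trivially) closed under finite perturbations and satisfies $\MM([\ff])=\bigcup_{\gg\in[\ff]}\MM(\gg)=\MM(\ff)$.

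Next I would apply Theorem~\ref{theoremvariational1} with $\FF$ replaced by $[\ff]$, obtaining $\HD(\MM(\ff))=\HD(\MM([\ff]))=\sup_{\gg\in[\ff]}\underline\delta(\gg)$ and $\PD(\MM(\ff))=\sup_{\gg\in[\ff]}\overline\delta(\gg)$; in particular, taking $\gg=\ff$ shows $\HD(\MM(\ff))\geq\underline\delta(\ff)$ and $\PD(\MM(\ff))\geq\overline\delta(\ff)$ for every template $\ff$.

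Finally I would assemble the two chains of inequalities. For each $\ff\in\FF$ we have $\MM(\ff)\subseteq\MM(\FF)$, hence $\HD(\MM(\ff))\leq\HD(\MM(\FF))$ by monotonicity of Hausdorff dimension, so $\sup_{\ff\in\FF}\HD(\MM(\ff))\leq\HD(\MM(\FF))$. Conversely, combining the previous step with Theorem~\ref{theoremvariational1} applied to $\FF$, $\sup_{\ff\in\FF}\HD(\MM(\ff))\geq\sup_{\ff\in\FF}\underline\delta(\ff)=\HD(\MM(\FF))$. Together these give the first identity in \eqref{variational}; the packing-dimension identity is proved identically with $\overline\delta$ and $\PD$ in place of $\underline\delta$ and $\HD$. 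I do not expect a genuine obstacle here: all the substance is contained in Theorem~\ref{theoremvariational1}, and the only point requiring a moment's thought is that it is legitimate to apply that theorem to $[\ff]$, which is precisely the reason the hypothesis ``closed under finite perturbations'' is built into its statement.
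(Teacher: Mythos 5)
Your argument is correct, and it is essentially the only sensible route: the paper states the corollary without an explicit proof (it is left to the reader immediately after Theorem~\ref{theoremvariational1}), and the paper even flags, right after the corollary, the very pitfall your argument circumvents — namely that Theorem~\ref{theoremvariational1} does \emph{not} give $\HD(\MM(\ff))=\underline\delta(\ff)$ for an individual template, since $\{\ff\}$ is not closed under finite perturbations. Your two-pronged approach handles this cleanly: the inequality $\sup_{\ff\in\FF}\HD(\MM(\ff))\leq\HD(\MM(\FF))$ is just monotonicity of dimension under $\MM(\ff)\subseteq\MM(\FF)$, while the reverse inequality needs only the \emph{one-sided} bound $\HD(\MM(\ff))\geq\underline\delta(\ff)$, which you obtain legitimately by applying Theorem~\ref{theoremvariational1} to the finite-perturbation closure $[\ff]$ and observing $\MM([\ff])=\MM(\ff)$. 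This is exactly the intended reading, and the steps you identify as requiring care (that $\asymp_\plus$ is an equivalence relation on functions so $\MM(\gg)=\MM(\ff)$ whenever $\gg\asymp_\plus\ff$, and that $[\ff]$ is a legitimate collection to feed into Theorem~\ref{theoremvariational1}) are indeed the only points of substance. One small stylistic remark: you could shorten the $\geq$ direction slightly by noting that $\FF=\bigcup_{\ff\in\FF}[\ff]$ when $\FF$ is closed under finite perturbations, so $\sup_{\ff\in\FF}\HD(\MM(\ff))=\sup_{\ff\in\FF}\sup_{\gg\in[\ff]}\underline\delta(\gg)=\sup_{\gg\in\FF}\underline\delta(\gg)=\HD(\MM(\FF))$ directly, but your version is equally valid.
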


However, note that Theorem \ref{theoremvariational1} does not imply that $\HD(\MM(\ff)) = \underline\delta(\ff)$ for an individual template $\ff$, since the family $\{\ff\}$ is not closed under finite perturbations. And indeed, since the function $\underline\delta$ is sensitive to finite perturbations, the formula $\HD(\MM(\ff)) = \underline\delta(\ff)$ cannot hold for all $\ff\in\TT_\dims$.

\begin{definition}
\label{definitiondimtemplate}
We define the lower and upper average contraction rate of a template $\ff$ as follows. Let $I$ be an open interval on which $\ff$ is linear. For each $q = 1,\ldots,d$ such that $f_q < f_{q + 1}$ on $I$, let $\dir_\pm = \dir_\pm(\ff,I,q) \in [0,d_\pm]_\Z$ be chosen to satisfy $\dir_+ + \dir_- = q$ and
\begin{equation}
\label{Lqdef}
F_q' = \sum_{i=1}^q f_i' = \frac{\dir_+}{\pdim} - \frac{\dir_-}{\qdim} \text{ on } I,
\end{equation}
as guaranteed by (III) of Definition \ref{definitiontemplate}. An \emph{interval of equality} for $\ff$ on $I$ is an interval $\OC pq_\Z$, where $0 \leq p < q \leq d$ satisfy
\begin{equation}
\label{pqdef}
f_p < f_{p+1} = \cdots = f_q < f_{q+1} \text{ on } I.
\end{equation}
Note that the collection of intervals of equality forms a partition of $[1,d]_\Z$. If $\OC pq_\Z$ is an interval of equality for $\ff$ on $I$, then we let $\diff_\pm(p,q) = \diff_\pm(\ff,I,p,q)$, where
\begin{equation}
\label{Mpqdef}
\diff_\pm(\ff,I,p,q) = \dir_\pm(\ff,I,q) - \dir_\pm(\ff,I,p),
\end{equation}
and we let
\begin{align} \label{Splusdef1}
S_+(\ff,I) &= \bigcup_{\OC pq_\Z} \bigOC p{p+\diff_+(p,q)}_\Z\\ \label{Sminusdef1}
S_-(\ff,I) &= \bigcup_{\OC pq_\Z} \bigOC{p+\diff_+(p,q)}q_\Z
\end{align}
where the unions are taken over all intervals of equality for $\ff$ on $I$. 
Note that $\diff_\pm(p,q) \geq 0$ by (II) of Definition \ref{definitiontemplate}, and further that $S_+$ and $S_-$ are disjoint and satisfy $S_+\cup S_- = [1,d]_\Z$, and that $\#(S_+) = \pdim$ and $\#(S_-) = \qdim$.
Next, let
\begin{equation}
\label{dimfI}
\delta(\ff,I) = \#\{(i_+,i_-)\in S_+\times S_- : i_+ < i_-\} \in [0,\dimprod]_\Z,
\end{equation}
and note that
\begin{equation}
\label{codimfI}
\dimprod - \delta(\ff,I) = \#\{(i_+,i_-)\in S_+\times S_- : i_+ > i_-\}.
\end{equation}
The \emph{lower and upper average contraction rates} of $\ff$ are the numbers
\begin{align}
\label{deltaFH}
\underline\delta(\ff) &\df \liminf_{T\to\infty} \Delta(\ff,T),&
\overline\delta(\ff) &\df \limsup_{T\to\infty} \Delta(\ff,T),
\end{align}
where
\[
\Delta(\ff,T) \df \frac1T \int_0^T \delta(\ff,t) \;\dee t.
\]
Here we abuse notation by writing $\delta(\ff,t) = \delta(\ff,I)$ for all $t\in I$.
\end{definition}

Definition \ref{definitiondimtemplate} can be understood intuitively in terms of a simple version of one-dimensional physics with sticky collisions and conservation of momentum. Suppose that we observe particles $P_1,\ldots,P_d$ travelling along trajectories $f_1,\ldots,f_d$ during a time interval $I$ along which $\ff$ is linear, and we want to infer the velocities of these particles before they collided, based on the following background information: before the collision $\pdim$ of the particles were travelling upwards at a speed of $\frac1\pdim$, and $\qdim$ of the particles were travelling downwards at a speed of $\frac1\qdim$. When particles collide (that is, when the velocities of the particles of lower index are more upwards than the velocities of the particles of higher index at the same location), they join forces to move as a unit, and their new velocity is determined by conservation of momentum. However, we can still think of the group as being composed of a certain number of ``upwards'' particles and a certain number of ``downwards'' particles.

The equations \eqref{Splusdef1} and \eqref{Sminusdef1} can be understood as suggesting a particular solution to this problem of inference: assume that within each group, all of the upwards-travelling particles started out below all of the downwards-travelling particles. This is not the only possible solution but it is the nicest one for certain purposes. Specifically, we can imagine a force of ``gravity'' attempting to bring all of the particles together, which acts between any two particles by imposing a fixed energy cost if the two particles are travelling away from each other.\Footnote{This is of course unlike real gravity, which imposes an energy cost variable with respect to distance.} The total energy cost is then the codimension $\dimprod - \delta(\ff,I)$ defined by \eqref{codimfI}. The equations \eqref{Splusdef1} and \eqref{Sminusdef1} can then be thought of as giving the solution that minimizes this cost.

The idea of codimension as an energy cost is also useful for computing the suprema \eqref{variational1} in certain circumstances, since it suggests principles like the conservation of energy. However, one needs to be careful since the stickiness of collisions means that some naive formulations of conservation of energy are violated.\\

In most cases of interest, the collection $\FF$ in Theorem \ref{theoremvariational1} is defined by some Diophantine condition. In this case, generally rather than $\MM(\FF)$ the set we are really interested in is the set of all matrices whose corresponding successive minima functions satisfy the same Diophantine condition. Although these two sets are \emph{a priori} different, Theorem \ref{theoremSSR}(i) implies that they are the same and thus Theorem \ref{theoremvariational1} is equivalent modulo Theorem \ref{theoremSSR}(i) to the following:

\begin{theorem}[Variational principle, version 2]
\label{theoremvariational2}
Let $\SS$ be a collection of functions from $\Rplus$ to $\R^d$ which is closed under finite perturbations, and let
\[
\MM(\SS) = \{\bfA : \Mink_\bfA \in \SS\}.
\]
Then
\begin{align}
\label{variational2}
\HD(\MM(\SS)) &= \sup_{\ff\in\SS\cap \TT_\dims} \underline\delta(\ff),&
\PD(\MM(\SS)) &= \sup_{\ff\in\SS\cap \TT_\dims} \overline\delta(\ff).
\end{align}
\end{theorem}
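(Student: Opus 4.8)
The plan is to deduce Theorem \ref{theoremvariational2} from Theorem \ref{theoremvariational1} together with Theorem \ref{theoremSSR}(i); this is essentially a bookkeeping argument once the right collection of templates is identified. Given a set $\SS$ of functions closed under finite perturbations, set $\FF = \SS \cap \TT_\dims$. I first need to check that $\FF$ is closed under finite perturbations as a collection of templates: if $\gg \asymp_\plus \ff$ with $\ff \in \FF$ and $\gg \in \TT_\dims$, then $\gg \asymp_\plus \ff \in \SS$, so $\gg \in \SS$ since $\SS$ is closed under finite perturbations, hence $\gg \in \SS \cap \TT_\dims = \FF$. Thus Theorem \ref{theoremvariational1} applies to $\FF$ and yields
\[
\HD(\MM(\FF)) = \sup_{\ff \in \FF} \underline\delta(\ff), \qquad \PD(\MM(\FF)) = \sup_{\ff \in \FF} \overline\delta(\ff),
\]
which are exactly the right-hand sides of \eqref{variational2}. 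So it remains to prove $\MM(\SS) = \MM(\FF)$, i.e. that $\{\bfA : \Mink_\bfA \in \SS\} = \bigcup_{\ff \in \SS \cap \TT_\dims} \{\bfA : \Mink_\bfA \asymp_\plus \ff\}$.

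For the inclusion $\MM(\FF) \subseteq \MM(\SS)$: if $\bfA \in \MM(\ff)$ for some $\ff \in \FF$, then $\Mink_\bfA \asymp_\plus \ff \in \SS$, and since $\SS$ is closed under finite perturbations, $\Mink_\bfA \in \SS$, so $\bfA \in \MM(\SS)$. For the reverse inclusion $\MM(\SS) \subseteq \MM(\FF)$: let $\bfA$ be a matrix with $\Mink_\bfA \in \SS$. By Theorem \ref{theoremSSR}(i) there exists a template $\ff \in \TT_\dims$ with $\Mink_\bfA \asymp_\plus \ff$. Then $\ff \asymp_\plus \Mink_\bfA \in \SS$, and closure of $\SS$ under finite perturbations gives $\ff \in \SS$, hence $\ff \in \SS \cap \TT_\dims = \FF$. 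Since $\Mink_\bfA \asymp_\plus \ff$ we have $\bfA \in \MM(\ff) \subseteq \MM(\FF)$. This establishes $\MM(\SS) = \MM(\FF)$ and completes the proof.

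I do not anticipate a genuine obstacle here: the entire content of the statement is pushed into Theorem \ref{theoremvariational1} (the actual variational principle, whose proof presumably occupies the bulk of the paper) and Theorem \ref{theoremSSR}(i) (the Schmidt--Summerer/Roy-type approximation of successive minima functions by templates). The only subtlety worth flagging is the symmetric use of the hypothesis that $\SS$ is closed under finite perturbations: it is needed in \emph{both} directions — to go from a template in $\SS$ back to an arbitrary finite perturbation (so that $\Mink_\bfA$ itself lands in $\SS$), and to go from $\Mink_\bfA \in \SS$ to the approximating template being in $\SS$. One should also note that $\asymp_\plus$ is an equivalence relation (reflexive, symmetric, transitive up to doubling the constant), which is what makes "closed under finite perturbations" a well-posed and symmetric notion; this is implicit in the definition already given in the excerpt. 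No new definitions or estimates are required beyond what has been stated.
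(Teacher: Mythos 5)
Your proposal is correct and follows exactly the paper's own route: set $\FF = \SS \cap \TT_\dims$, verify that $\FF$ is closed under finite perturbations, invoke Theorem \ref{theoremvariational1}, and use Theorem \ref{theoremSSR}(i) together with closure under finite perturbations (in both directions, as you note) to identify $\MM(\SS)$ with $\MM(\FF)$. You have simply spelled out the bookkeeping that the paper's two-line ``proof of equivalence'' leaves implicit.
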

\begin{proof}[Proof of equivalence]
Theorem \ref{theoremvariational2} implies Theorem \ref{theoremvariational1} since we can take $\SS = \{\gg : \gg \asymp_\plus \ff \in \FF\}$. Conversely, Theorem \ref{theoremvariational1} implies Theorem \ref{theoremvariational2} modulo Theorem \ref{theoremSSR}(i) since we can take $\FF = \SS\cap\TT_\dims$.
\end{proof}

Theorem \ref{theoremvariational2} can be thought of as a quantitative analogue of Theorem \ref{theoremSSR}, as shown by the following equivalent formulation:

\begin{theorem}[Variational principle, version 3]
\label{theoremvariational3}
~
\begin{itemize}
\item[(i)] Let $S$ be a set of $\pdim\times\qdim$ matrices of Hausdorff (resp. packing) dimension $>\delta$. Then there exists a matrix $\bfA\in S$ and a template $\ff \asymp_\plus \Mink_\bfA$ whose lower (resp. upper) average contraction rate is $> \delta$.
\item[(ii)] Let $\ff$ be a template whose lower (resp. upper) average contraction rate is $> \delta$. Then there exists a set $S$ of $\pdim\times\qdim$ matrices of Hausdorff (resp. packing) dimension $>\delta$, such that $\Mink_\bfA \asymp_\plus \ff$ for all $\bfA\in S$.
\end{itemize}
\end{theorem}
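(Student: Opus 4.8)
The plan is to prove Theorem~\ref{theoremvariational3} by showing it is logically equivalent to Theorem~\ref{theoremvariational2} (hence, via the equivalence already recorded, to Theorem~\ref{theoremvariational1} modulo Theorem~\ref{theoremSSR}(i)); the argument is purely formal bookkeeping with no analytic content. First I would reduce to the Hausdorff/$\underline\delta$ case, the packing/$\overline\delta$ case being verbatim identical, and record the single structural fact used throughout: $\asymp_\plus$ is an equivalence relation on maps $\Rplus\to\R^d$, so that a family ``closed under finite perturbations'' is exactly a union of $\asymp_\plus$-classes, and this property is inherited by all the auxiliary families introduced below.

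For the implication Theorem~\ref{theoremvariational2} $\Rightarrow$ Theorem~\ref{theoremvariational3}(i) I would argue by contradiction. Given a set $S$ with $\HD(S)>\delta$ for which the conclusion of (i) fails, set
\[
\SS \df \{\gg:\Rplus\to\R^d \text{ such that every template } \ff\asymp_\plus\gg \text{ satisfies } \underline\delta(\ff)\le\delta\}.
\]
Transitivity of $\asymp_\plus$ makes $\SS$ closed under finite perturbations, and the failure of (i) for $S$ says precisely that $\Mink_\bfA\in\SS$ for every $\bfA\in S$, i.e.\ $S\subseteq\MM(\SS)$, so $\HD(\MM(\SS))>\delta$. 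Theorem~\ref{theoremvariational2} then furnishes a template $\ff\in\SS\cap\TT_\dims$ with $\underline\delta(\ff)>\delta$; but $\ff$ is a template with $\ff\asymp_\plus\ff$, so membership in $\SS$ forces $\underline\delta(\ff)\le\delta$, a contradiction. For part (ii), given a template $\ff$ with $\underline\delta(\ff)>\delta$ I would take $S\df\MM(\ff)=\{\bfA:\Mink_\bfA\asymp_\plus\ff\}$ outright --- which trivially obeys the required relation --- apply Theorem~\ref{theoremvariational2} to the single class $\SS\df\{\gg:\gg\asymp_\plus\ff\}$, and observe that $\ff\in\SS\cap\TT_\dims$ yields $\HD(S)=\HD(\MM(\SS))=\sup_{\gg\in\SS\cap\TT_\dims}\underline\delta(\gg)\ge\underline\delta(\ff)>\delta$.

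For the converse Theorem~\ref{theoremvariational3} $\Rightarrow$ Theorem~\ref{theoremvariational2} I would fix $\SS$ closed under finite perturbations and establish the two inequalities in \eqref{variational2} separately. For ``$\le$'': for every $\delta<\HD(\MM(\SS))$, part (i) applied to $S=\MM(\SS)$ produces $\bfA\in\MM(\SS)$ and a template $\ff\asymp_\plus\Mink_\bfA$ with $\underline\delta(\ff)>\delta$; closure of $\SS$ gives $\ff\in\SS\cap\TT_\dims$, whence $\sup_{\ff\in\SS\cap\TT_\dims}\underline\delta(\ff)>\delta$, and letting $\delta$ increase to $\HD(\MM(\SS))$ finishes this direction. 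For ``$\ge$'': any $\ff\in\SS\cap\TT_\dims$ with $\underline\delta(\ff)>\delta$ feeds into part (ii) to give a set $S$ with $\HD(S)>\delta$ and $\Mink_\bfA\asymp_\plus\ff$ for all $\bfA\in S$; closure of $\SS$ then forces $S\subseteq\MM(\SS)$, so $\HD(\MM(\SS))>\delta$, and taking the supremum over such $\ff$ yields the reverse inequality.

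I do not anticipate any genuine obstacle, as the whole equivalence is formal; the only points requiring care are (a) checking that the auxiliary families --- the set $\SS$ above and the $\asymp_\plus$-class of a fixed template --- are indeed closed under finite perturbations, which is immediate from transitivity of $\asymp_\plus$, and (b) observing that Theorem~\ref{theoremvariational3}(i) tacitly relies on the existence half of Theorem~\ref{theoremSSR}, namely that $\Mink_\bfA$ admits \emph{some} approximating template, even though this fact is not separately invoked in the equivalence with Theorem~\ref{theoremvariational2}. The substantive content --- the proof of the variational principle in any one of its forms --- lies elsewhere and is untouched by this reduction.
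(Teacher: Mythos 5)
Your proof is correct and takes essentially the same approach as the paper: the paper's own argument is the same formal bookkeeping, reducing part (i) to the $\leq$ direction and part (ii) to the $\geq$ direction of \eqref{variational2} via explicit choices of $S$ and $\SS$ (the paper uses $\SS = \{\gg : \gg\asymp_\plus \Mink_\bfA,\; \bfA\in S\}$ directly rather than your contradiction set, but the two are interchangeable). Your aside (b) is somewhat off the mark --- the equivalence argument itself does not require Theorem~\ref{theoremSSR}(i); rather, the truth of Theorem~\ref{theoremvariational2} (and hence of Theorem~\ref{theoremvariational3}) is what rests on it via Theorem~\ref{theoremvariational1} --- but this does not affect the correctness of the reduction.
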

\comdavid{In fact, Theorem \ref{theoremvariational4} will be the form of the variational principle that we will prove.}
\begin{proof}[Proof of equivalence]
Part (i) is equivalent to the $\leq$ direction of \eqref{variational2}, and part (ii) to the $\geq$ direction. For the first equivalence, for the forwards direction take $S = \{\bfA : \Mink_\bfA \in \SS\}$, and for the backwards direction take $\SS = \{\gg : \gg\asymp_\plus \Mink_\bfA, \; \bfA \in S\}$. For the second equivalence, for the backwards direction take $S = \MM(\ff)$ and $\SS = \{\gg : \gg\asymp_\plus \ff\}$.
\end{proof}

It is worth stating the special case of Theorem \ref{theoremvariational2} that occurs when the collection $\SS$ is defined by the Diophantine conditions defining $\Sing_\dims(\omega)$ and $\Sing_\dims^*(\omega)$ for some $\omega\geq \dirichlet$. Thus, we define the \emph{uniform dynamical exponent} of a map $\ff:\Rplus\to\R^d$ to be the number
\[
\what\dynexp(\ff) = \liminf_{t\to\infty} \frac{-1}t f_1(t).
\]
Similarly, $\ff$ is said to be \emph{trivially singular} if $f_{j+1}(t) - f_j(t) \to \infty$ as $t\to\infty$ for some $j = 1,\ldots,d-1$. Letting $\SS = \{\ff : \what\dynexp(\ff) = \dynexp\}$ or $\SS = \{\ff : \what\dynexp(\ff) = \dynexp, \; \ff \text{ not trivially singular}\}$ in Theorem \ref{theoremvariational2} yields the following result:

\begin{theorem}[Special case of variational principle]
\label{theoremvariational4}
For all $\omega \geq \dirichlet$, we have
\begin{align*}
\HD(\Sing_\dims(\omega)) &= \sup\{\underline\delta(\ff): \ff\in\TT_\dims,\;\;\what\dynexp(\ff) = \dynexp\}\\
\PD(\Sing_\dims(\omega)) &= \sup\{\overline\delta(\ff): \ff\in\TT_\dims,\;\;\what\dynexp(\ff) = \dynexp\}\\
\HD(\Sing_\dims^*(\omega)) &= \sup\{\underline\delta(\ff): \ff\in\TT_\dims,\;\;\what\dynexp(\ff) = \dynexp, \; \ff \text{ not trivially singular}\}\\
\PD(\Sing_\dims^*(\omega)) &= \sup\{\overline\delta(\ff): \ff\in\TT_\dims,\;\;\what\dynexp(\ff) = \dynexp, \; \ff \text{ not trivially singular}\},
\end{align*}
where $\dynexp$ is as in \eqref{dani}. 
\end{theorem}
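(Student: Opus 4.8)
~

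\textbf{Approach.} The statement is just the special case of Theorem~\ref{theoremvariational2} obtained by plugging in $\SS = \{\ff : \what\dynexp(\ff) = \dynexp\}$ (resp.\ the variant with the trivial-singularity condition added), so the entire content is to check that $\SS$ is closed under finite perturbations and that $\MM(\SS) = \Sing_\dims(\omega)$ (resp.\ $\Sing_\dims^*(\omega)$). The plan is therefore: first establish the translation between the Diophantine side and the successive-minima side, then invoke Theorem~\ref{theoremvariational2} verbatim.

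\textbf{Step 1: $\SS$ is closed under finite perturbations.} If $\gg \asymp_\plus \ff$ then $g_1(t) = f_1(t) + O(1)$, so $\tfrac{-1}{t}g_1(t) = \tfrac{-1}{t}f_1(t) + O(1/t)$ and hence $\what\dynexp(\gg) = \what\dynexp(\ff)$; thus $\{\ff : \what\dynexp(\ff) = \dynexp\}$ is closed under finite perturbations. For the starred version one also notes that the condition ``$f_{j+1}(t) - f_j(t)\to\infty$'' is unaffected by an $O(1)$ perturbation of each $f_i$, so the property of being trivially singular, and hence of not being trivially singular, is preserved; the intersection of two finite-perturbation-closed families is finite-perturbation-closed.

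\textbf{Step 2: identifying $\MM(\SS)$.} Recall $\mink_i(t) = \log\lambda_i(g_t u_\bfA\Z^d)$, so $\what\dynexp(\Mink_\bfA) = \liminf_{t\to\infty}\tfrac{-1}{t}\mink_1(t) = \what\dynexp(\bfA)$ in the notation of Theorem~\ref{theoremdani2}. By the definition of $\Sing_\dims(\omega)$ given just before Theorem~\ref{theoremhsmall}, namely $\Sing_\dims(\omega) = \{\bfA : \what\omega(\bfA) = \omega\} = \{\bfA : \what\dynexp(\bfA) = \dynexp\}$ with $\dynexp$ related to $\omega$ by \eqref{dani}, we get $\Sing_\dims(\omega) = \{\bfA : \what\dynexp(\Mink_\bfA) = \dynexp\} = \MM(\SS)$ for $\SS = \{\ff : \what\dynexp(\ff) = \dynexp\}$. (The equivalence of the two descriptions of $\Sing_\dims(\omega)$ is exactly Theorem~\ref{theoremdani2}, which also shows $\dynexp > 0$ corresponds to very singular, so the levelsets are well defined for $\omega > \dirichlet$, and $\dynexp = 0$ for $\omega = \dirichlet$.) Likewise, comparing the definition of a trivially singular matrix in Remark~\ref{remarktriviallysingular} with the definition of a trivially singular map $\ff$ given just above the theorem, $\bfA$ is trivially singular iff $\Mink_\bfA$ is; hence $\Sing_\dims^*(\omega) = \MM(\SS')$ with $\SS' = \{\ff : \what\dynexp(\ff) = \dynexp,\ \ff\text{ not trivially singular}\}$.

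\textbf{Step 3: conclusion and the one real point.} Applying Theorem~\ref{theoremvariational2} to $\SS$ and to $\SS'$ gives all four displayed formulas, since $\SS\cap\TT_\dims = \{\ff\in\TT_\dims : \what\dynexp(\ff) = \dynexp\}$ and similarly for $\SS'$. The only genuinely non-formal ingredient is the assertion $\what\dynexp(\Mink_\bfA) = \what\dynexp(\bfA)$ together with the relation \eqref{dani}, i.e.\ Theorem~\ref{theoremdani2}, which we may assume; given that, the proof is a matter of unwinding definitions. I do not anticipate a real obstacle here — the ``hard part'' has been externalized into Theorem~\ref{theoremvariational2} and Theorem~\ref{theoremdani2} — the only thing to be careful about is making sure the finite-perturbation closure genuinely holds for the starred family, which is why Step~1 treats the trivial-singularity clause explicitly.
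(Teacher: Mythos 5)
Your proposal is correct and takes the same route as the paper, which states Theorem~\ref{theoremvariational4} as the immediate consequence of Theorem~\ref{theoremvariational2} obtained by taking $\SS = \{\ff : \what\dynexp(\ff) = \dynexp\}$ (resp.\ the starred variant). Your Steps~1 and~2 merely make explicit the closure under finite perturbations and the identification $\MM(\SS) = \Sing_\dims(\omega)$ via Theorem~\ref{theoremdani2}, which the paper leaves implicit.
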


Theorem \ref{theoremvariational2} can also be used to compute the dimensions of the set
\[
\w\Sing_\dims^*(\omega) = \{\bfA:\what\omega(\bfA) \geq \omega,\;\bfA\text{ not trivially singular}\} = \bigcup_{\omega'\geq \omega} \Sing_\dims^*(\omega').
\]
\begin{theorem}[Special case of variational principle]
\label{theoremvariational5}
For all $\omega \geq \dirichlet$, we have
\begin{align*}
\HD(\w\Sing_\dims^*(\omega)) &= \sup_{\omega' \geq \omega} \HD(\Sing_\dims^*(\omega'))\\
\PD(\w\Sing_\dims^*(\omega)) &= \sup_{\omega' \geq \omega} \PD(\Sing_\dims^*(\omega')).
\end{align*}
\end{theorem}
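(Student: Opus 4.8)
The plan is to deduce this from Theorem \ref{theoremvariational2}, exactly as Theorems \ref{theoremvariational4} was deduced, by identifying the relevant collection $\SS$ of successive minima functions and computing the two sides. Set $\SS = \{\ff : \what\dynexp(\ff) \geq \dynexp, \; \ff \text{ not trivially singular}\}$, where $\dynexp$ is the value corresponding to $\omega$ via \eqref{dani}. One first checks that $\SS$ is closed under finite perturbations: the quantity $\what\dynexp(\ff) = \liminf_{t\to\infty} -f_1(t)/t$ is unchanged if $\ff$ is replaced by $\gg \asymp_\plus \ff$ (a bounded additive perturbation does not affect a $\liminf$ of a quantity divided by $t$), and likewise the condition $f_{j+1}(t) - f_j(t) \to \infty$ is insensitive to bounded perturbations, so non-trivial-singularity is preserved. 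Since $\what\omega(\bfA) \geq \omega \iff \what\dynexp(\Mink_\bfA) \geq \dynexp$ by Theorem \ref{theoremdani2} together with \eqref{hitdef}, and trivial singularity of $\bfA$ is by definition trivial singularity of $\Mink_\bfA$, we get $\MM(\SS) = \w\Sing_\dims^*(\omega)$. Theorem \ref{theoremvariational2} then gives
\[
\HD(\w\Sing_\dims^*(\omega)) = \sup\{\underline\delta(\ff) : \ff\in\TT_\dims,\; \what\dynexp(\ff)\geq \dynexp,\; \ff \text{ not trivially singular}\},
\]
and similarly for $\PD$ with $\overline\delta$.

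It remains to identify this supremum with $\sup_{\omega'\geq\omega} \HD(\Sing_\dims^*(\omega'))$. By the last two lines of Theorem \ref{theoremvariational4}, for each fixed $\omega' \geq \dirichlet$ we have $\HD(\Sing_\dims^*(\omega')) = \sup\{\underline\delta(\ff) : \ff\in\TT_\dims,\; \what\dynexp(\ff) = \dynexp',\; \ff\text{ not trivially singular}\}$, where $\dynexp'$ corresponds to $\omega'$. Now partition the template set $\{\ff\in\TT_\dims : \what\dynexp(\ff) \geq \dynexp,\;\ff\text{ not trivially singular}\}$ according to the value of $\what\dynexp(\ff) \in [\dynexp, \tfrac1\qdim]$ (recall $\what\dynexp$ is always bounded above by $\tfrac1\qdim$, with the sharper bound $\tfrac{\pdim-1}{2\pdim}$ when $\qdim = 1$, for non-trivially-singular $\ff$). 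The map $\omega' \mapsto \dynexp'$ given by \eqref{dani} is a bijection from $[\omega,\infty]$ (resp.\ $[\omega,1]$ when $\qdim=1$) onto the admissible range of $\what\dynexp$, so
\[
\sup\{\underline\delta(\ff) : \what\dynexp(\ff)\geq\dynexp,\;\ff\text{ not triv.\ sing.}\} = \sup_{\dynexp'\geq\dynexp} \sup\{\underline\delta(\ff) : \what\dynexp(\ff) = \dynexp',\;\ff\text{ not triv.\ sing.}\} = \sup_{\omega'\geq\omega} \HD(\Sing_\dims^*(\omega')),
\]
which is the claimed formula; the packing-dimension statement is identical with $\underline\delta$ replaced by $\overline\delta$ and $\HD$ by $\PD$.

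The only genuinely nontrivial point is the interchange of suprema, i.e.\ writing a supremum over $\{\ff : \what\dynexp(\ff) \geq \dynexp\}$ as a double supremum over level sets $\{\what\dynexp(\ff) = \dynexp'\}$ for $\dynexp' \geq \dynexp$; this is just the trivial set-theoretic identity $\sup_{x\in\bigcup_\alpha X_\alpha} \phi(x) = \sup_\alpha \sup_{x\in X_\alpha}\phi(x)$ once one notes that $\{\ff : \what\dynexp(\ff)\geq\dynexp\}$ is the disjoint union of the level sets over $\dynexp'\in[\dynexp,\tfrac1\qdim]$, and that the $\dynexp'$ outside the range realized by non-trivially-singular templates contribute empty (hence harmless) sets. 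Thus there is no real obstacle here: the theorem is essentially a bookkeeping consequence of Theorem \ref{theoremvariational2} and Theorem \ref{theoremvariational4}, with the substantive content already contained in the variational principle itself. The one thing to double-check carefully is that the closure-under-finite-perturbations hypothesis genuinely holds for the superlevel-set collection $\SS$, since Theorem \ref{theoremvariational2} is false without it; but as noted above this is immediate from the scaling form of $\what\dynexp$ and the perturbation-invariance of the trivial-singularity condition.
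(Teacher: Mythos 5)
Your proposal is correct and follows the intended route: the paper leaves Theorem \ref{theoremvariational5} as an immediate consequence of Theorem \ref{theoremvariational2} applied to the superlevel-set collection $\SS = \{\ff : \what\dynexp(\ff) \geq \dynexp,\; \ff\text{ not trivially singular}\}$, and your partition of that collection by the value of $\what\dynexp(\ff)$ followed by an invocation of Theorem \ref{theoremvariational4} is exactly the bookkeeping the authors intend. You also correctly identify the two points that actually need checking (closure under finite perturbations of $\SS$, and the monotone bijection $\omega' \leftrightarrow \dynexp'$ from \eqref{dani} that makes $\{\what\dynexp \geq \dynexp\}$ correspond to $\{\what\omega \geq \omega\}$), and correctly observe that values of $\dynexp'$ not realized by non-trivially-singular templates contribute empty (hence harmless) sets to both sides.
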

(Theorem \ref{theoremvariational5} is also true with the stars removed, but in that case it is not as interesting because $\HD(\Sing_\dims(\infty))$ is ``too large'', whereas $\HD(\Sing_\dims^*(\infty))$ is the ``correct'' size according to Remark \ref{remarktriviallysingular}.)

It is natural to expect that the map $\omega\mapsto \HD(\Sing_\dims^*(\omega))$ is monotonically decreasing, in which case Theorem \ref{theoremvariational5} would imply that
\[
\HD(\w\Sing^*(\omega)) = \HD(\Sing_\dims^*(\omega)).
\]

\begin{conjecture}
The functions
\begin{align*}
\omega &\mapsto \HD(\Sing_\dims^*(\omega)),&
\omega &\mapsto \PD(\Sing_\dims^*(\omega))
\end{align*}
are decreasing and continuous, and furthermore are computable in the sense of \cite{Weihrauch}.
\end{conjecture}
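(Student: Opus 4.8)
By Theorem~\ref{theoremvariational4}, together with the fact that the map $\omega\mapsto\dynexp$ of \eqref{dani} is an increasing homeomorphism from the interval of admissible values of $\omega$ onto the interval of admissible values of $\dynexp$, the conjecture is equivalent to the assertion that the functions
\[
\dynexp\longmapsto\sup\big\{\underline\delta(\ff):\ff\in\TT_\dims,\ \what\dynexp(\ff)=\dynexp,\ \ff\text{ not trivially singular}\big\}
\]
and its packing analogue, obtained by replacing $\underline\delta$ with $\overline\delta$, are decreasing, continuous, and computable in the sense of \cite{Weihrauch}. The whole problem is thereby reduced to the combinatorial geometry of templates, and the plan is to work there; note also that monotonicity of $\dynexp\mapsto\dim\Sing_\dims^*(\omega)$ would, in combination with Theorem~\ref{theoremvariational5}, settle the equality $\dim\w\Sing_\dims^*(\omega)=\dim\Sing_\dims^*(\omega)$ anticipated after that theorem.

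For monotonicity and continuity the plan is a surgery argument. Given $\dynexp<\dynexp'$ and a near-optimal template $\ff$ with $\what\dynexp(\ff)=\dynexp'$, one constructs a competitor $\gg$ with $\what\dynexp(\gg)=\dynexp$ by alternating, along a sequence of times $T_k\to\infty$, long stretches on which $\gg$ imitates $\ff$ with short ``recovery'' stretches on which the bottom trajectory $g_1$ is allowed to climb back toward the line $-\dynexp t$, so that $-g_1(t)/t$ returns to $\dynexp$ along the $T_k$. Since $-\tfrac1\qdim\leq g_1'\leq\tfrac1\pdim$, a recovery stretch that raises the running contraction rate of the leading coordinate from $\dynexp'$ to $\dynexp$ can be taken of relative length $\Theta(\dynexp'-\dynexp)$, hence of small density when $\dynexp'-\dynexp$ is small; and because one can make $\delta$ equal its maximum value $\dimprod$ on any linear stretch where $g_1<\dots<g_d$ and the indices $1,\dots,\pdim$ move up at speed $\tfrac1\pdim$ while $\pdim+1,\dots,d$ move down at speed $\tfrac1\qdim$, the recovery stretches need not cost anything in $\underline\delta$ at all, so that the only losses are on the short transition stretches where the particle configuration is rearranged. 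Running the construction in both directions and letting $\dynexp'-\dynexp\to0$ should then give local Lipschitz continuity, and a lossless version of it (with the transition stretches arranged so as to keep the up-indices small, which is favorable for $\delta$ by \eqref{dimfI}) should give monotonicity. The main obstacle is that condition~(III) of Definition~\ref{definitiontemplate} is a rigid global constraint --- convexity of \emph{every} partial sum $F_j$, on every interval where $f_j<f_{j+1}$ --- so the splicing must be engineered with some care, and one will likely need first to put $\ff$ into a normal form with controlled combinatorics.

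Computability is the deepest part, and the plan is to reduce the supremum defining $\dim\Sing_\dims^*(\omega)$ to a finite-dimensional optimization. Concretely, one would show that up to an error tending to $0$ uniformly in $\omega$, the supremum is attained over the subfamily of ``eventually self-similar'' templates that on each interval $[2^k,2^{k+1}]$ coincide with an affine rescaling of a fixed pattern of combinatorial complexity at most $N$; this subfamily is parametrized by a semialgebraic subset of some $\R^N$, on which the constraint $\what\dynexp(\ff)=\dynexp$ and the functionals $\underline\delta,\overline\delta$ are piecewise rational in the parameters and in $\dynexp$, so Tarski--Seidenberg quantifier elimination together with standard computable analysis (as in \cite{Weihrauch}) would show that the resulting value $\dim_N\Sing_\dims^*(\omega)$ is computable uniformly in $\omega$, and an effective rate for $\dim_N\Sing_\dims^*(\omega)\to\dim\Sing_\dims^*(\omega)$ would upgrade this to the limit. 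The main obstacle --- and the reason we state this only as a conjecture --- is exactly this effective finite-complexity reduction: it is a uniform, quantitative refinement of the explicit evaluation of $\dim\Sing_\dims^*(\omega)$ requested in \cite[Problem~2]{BCC}, and at present we can carry it through only in cases like $(\pdim,\qdim)=(1,2)$, where Theorem~\ref{theoremspecialcase} exhibits the extremal templates explicitly.
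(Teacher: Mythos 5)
This statement is an open \emph{conjecture} in the paper: it appears immediately before the remark ``The main difficulty in proving this conjecture is the rigidity of templates -- one would like to show that every template can be perturbed into a new template whose uniform dynamical exponent is either slightly larger, or much smaller, than that of the original template, and whose average contraction rates are not too much smaller than those of the original template. However, it is not at all clear how one would perform such a perturbation except in a few special cases.'' There is therefore no proof in the paper for your proposal to be compared against, and your proposal correctly does not claim to be one: it is a plan, explicitly hedged (``should then give,'' ``the plan is,'' ``we state this only as a conjecture''), and it identifies essentially the same obstruction the authors do, namely the rigidity imposed by condition~(III) of Definition~\ref{definitiontemplate}, which blocks naive splicing of templates.

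Within those limits, your sketch is a reasonable elaboration of the authors' one-sentence remark. The reduction via Theorem~\ref{theoremvariational4} and the $\omega\leftrightarrow\dynexp$ change of variables is exactly right, as is the observation that monotonicity would settle the equality $\dim\w\Sing_\dims^*(\omega)=\dim\Sing_\dims^*(\omega)$ anticipated after Theorem~\ref{theoremvariational5}. A few cautions on the surgery step you describe: a ``recovery'' stretch on which $f_1$ climbs at full speed $\tfrac1\pdim$ forces, through the constraint $F_\Dimsum'\equiv 0$, a compensating drop in the remaining coordinates, so whether $\delta(\ff,\cdot)$ can be held at its maximum value $\dimprod$ throughout the recovery depends on the ambient configuration of $f_2,\dots,f_d$ at the moment the recovery begins, not just on the local slopes you choose; and to control $\what\dynexp$, which is a $\liminf$, the recovery stretches must recur with positive lower density, so ``short transition stretches'' must be shown to have vanishing density relative to the recovery stretches themselves, not just relative to the imitation stretches. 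Your computability paragraph is a plausible high-level program (finite-complexity reduction plus Tarski--Seidenberg plus computable analysis), but the crucial effective finite-complexity reduction is, as you say, exactly what is missing, and the paper offers no hint that such a reduction holds beyond the special cases handled by Theorem~\ref{theoremspecialcase}.
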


The main difficulty in proving this conjecture is the rigidity of templates -- one would like to show that every template can be perturbed into a new template whose uniform dynamical exponent is either slightly larger, or much smaller, than that of the original template, and whose average contraction rates are not too much smaller than those of the original template. However, it is not at all clear how one would perform such a perturbation except in a few special cases.

\section{A characterization of Hausdorff and packing dimensions using games}
\label{sectiongame}

The proof of the variational principle is based on a new variant of Schmidt's game which is in principle capable of computing the Hausdorff and packing dimensions of any set. In Schmidt's game \cite{Schmidt1}, players take turns choosing a descending sequence of balls and compete to determine whether or not the intersection point of these balls is in a certain target set. The key feature of our new variant is that instead of requiring the rate at which the players' moves contribute information to the game to be constant, the new variant allows the rate of information transfer to be variable, with the first player, Alice, getting to choose the rate of information transfer. However, Alice is penalized if she exerts too much control over the game over long periods of time without giving her opponent Bob a chance to exert control over the game.

\begin{definition}
\label{definitiongames1}
Given $0 < \beta < 1$, Alice and Bob play the \emph{$\delta$-dimensional Hausdorff (resp. packing) $\beta$-game} as follows:
\begin{itemize}
\item The turn order is alternating, with Alice playing first. Thus, Bob's $k$th turn occurs after Alice's $k$th turn and before Alice's $(k + 1)$st turn.
\item Alice begins by choosing a starting radius $\rho_0 > 0$.
\item On the $k$th turn, Alice chooses a nonempty $3\rho_k$-separated set $A_k \subset \R^d$, and Bob responds by choosing a ball $B_k = B(\xx_k,\rho_k)$, where $\xx_k \in A_k$ and $\rho_k = \beta^k \rho_0$.
\item On the first (0th) turn, Alice's choice $A_0$ can be any finite set, but on subsequent turns she must choose it to satisfy
\begin{equation}
\label{Alicerules}
A_{k + 1} \subset B(\xx_k,(1 - \beta)\rho_k).
\end{equation}
Note that this condition guarantees that
\[
B_0 \supset B_1 \supset B_2 \supset \cdots
\]
\end{itemize}
After infinitely many turns have passed, the point
\[
\xx_\infty = \lim_{k\to\infty} \xx_k \in \bigcap_{k = 0}^\infty B_k
\]
is computed (note that the right-hand side is always a singleton). It is called the \emph{outcome} of the game. Also, we let $\AA = (A_k)_{k\in\N}$, and we compute the numbers
\begin{equation}
\label{Hausdorff}
\underline\delta(\AA) \df \liminf_{k\to\infty} \frac{1}{k} \sum_{i = 0}^k \frac{\log\#(A_i)}{-\log(\beta)}
\end{equation}
and
\begin{equation}
\label{packing}
\overline\delta(\AA) \df \limsup_{k\to\infty} \frac{1}{k} \sum_{i = 0}^k \frac{\log\#(A_i)}{-\log(\beta)}
\end{equation}
which are called Alice's \emph{lower} and \emph{upper scores}, respectively. Alice's goal will be to ensure that the outcome is in a certain set $S$, called the \emph{target set}, and simultaneously to maximize her score while doing so.

To be precise, a set $S \subset \R^d$ is said to be \emph{$\delta$-dimensionally Hausdorff (resp. packing) $\beta$-winning} if Alice has a strategy to simultaneously ensure that the outcome $\xx_\infty$ is in $S$, and that her lower (resp. upper) score is at least $\delta$. It is said to be \emph{$\delta$-dimensionally Hausdorff (resp. packing) winning} if it is $\delta$-dimensionally Hausdorff (resp. packing) $\beta$-winning for all sufficiently small $\beta > 0$.
The reader may contrast the definition of our game with that of Cheung's {\it self-similar coverings} in \cite[Section 3]{Cheung}.
\end{definition}

The following result is one of the key ingredients in the proof of the variational principle:

\begin{theorem}
\label{theoremHPgame}
The Hausdorff (resp. packing) dimension of a Borel set $S$ is the supremum of $\delta$ such that $S$ is $\delta$-dimensionally Hausdorff (resp. packing) winning.
\end{theorem}

\bibliographystyle{amsplain}

\bibliography{bibliography}

\end{document}